\documentclass[11pt]{amsart}
\usepackage{amssymb}
\usepackage{amscd}
\usepackage{color}
\usepackage{url}
\usepackage{hyperref}

%%%%%%%%%%%%%%%%%%%%%
\addtolength{\textheight}{.4in}
\addtolength{\topmargin}{-.2in}
\addtolength{\textwidth}{1.6in}
\addtolength{\oddsidemargin}{-.7in}
\addtolength{\evensidemargin}{-.7in}
\parindent=12pt
%%%%%%%%%%%%%%%%%%%%%

\numberwithin{equation}{section}

\theoremstyle{plain}
\newtheorem{theorem}[subsection]{Theorem}
\newtheorem{proposition}[subsection]{Proposition}
\newtheorem{lemma}[subsection]{Lemma}
\newtheorem{corollary}[subsection]{Corollary}

\theoremstyle{definition}

\newtheorem{definition}[subsection]{Definition}

\newcommand{\Z}{\mathbb Z}

\newcommand{\N}{\mathbb N}
\newcommand{\R}{\mathbb{R}}

\DeclareMathOperator*{\Ker}{Ker \;}
\DeclareMathOperator*{\Ran}{Ran \;}

\makeatletter
\def\author@andify{%
  \nxandlist {\unskip ,\penalty-1 \space\ignorespaces}%
    {\unskip {} \@@and~}%
    {\unskip \penalty-2 \space \@@and~}%
}
\makeatother

\title[Bishop operators]{Invariant subspaces for Bishop operators and beyond}

\author{Fernando Chamizo}
\address{Departamento de Matem\'aticas \\ Universidad Aut\'onoma de Ma\-drid \\ Madrid 28049 \\ Spain  and Instituto de Ciencias Matem\'aticas ICMAT (CSIC-UAM-UC3M-UCM), Madrid, Spain}
\email[corresponding author]{fernando.chamizo@uam.es}

\author{Eva A. Gallardo-Guti\'errez}
\address{Departamento de An\'alisis Matem\'atico y Matem\'atica Aplicada, Facultad de Matem\'aticas, Universidad Complutense de Madrid, Plaza de Ciencias 3, 28040 Madrid, Spain and Instituto de Ciencias Matem\'aticas ICMAT (CSIC-UAM-UC3M-UCM), Madrid, Spain}
\email{eva.gallardo@mat.ucm.es}

\author{Miguel Monsalve-L\'opez}
\address{Departamento de An\'alisis Matem\'atico y Matem\'atica Aplicada, Facultad de Matem\'aticas, Universidad Complutense de Madrid, Plaza de Ciencias 3, 28040 Madrid, Spain and Instituto de Ciencias Matem\'aticas ICMAT (CSIC-UAM-UC3M-UCM), Madrid, Spain}
\email{migmonsa@ucm.es}

\author{Adri\'an Ubis}
\address{Departamento de Matem\'aticas\\ Universidad Aut\'onoma de Madrid \\ Madrid 28049 \\ Spain}
\email{adrian.ubis@gmail.com}

\thanks{F. Chamizo and A. Ubis are partially supported by Plan Nacional  I+D grant no. MTM2017-83496-P (Spain); E. A. Gallardo-Guti\'errez and M. Monsalve-L\'opez are partially supported by Plan Nacional  I+D grant no. MTM2016-77710-P (Spain).
The first three authors are also partially supported by
``Severo Ochoa Programme for Centres of Excellence in
R{\&}D'' (SEV-2015-0554). Finally, M. Monsalve-L\'opez also acknowledges support of the grant \emph{Ayudas de la Universidad Complutense de Madrid  para contratos predoctorales de personal investigador en formaci\'on,} ref. no. CT27/16.}

\date{November 27, 2018}
\subjclass[2010]{47A15, 47B37, 47B38}

\begin{document}

\begin{abstract}
Bishop operators $T_{\alpha}$ acting on $L^2[0,1)$  were proposed by E. Bishop in the fifties as possible operators which might entail counterexamples for the Invariant Subspace Problem. We prove that all the Bishop operators are biquasitriangular and, derive as a consequence that they are norm limits of nilpotent operators. Moreover, by means of arithmetical techniques along with a theorem of Atzmon,  the set of irrationals $\alpha\in (0,1)$ for which $T_\alpha$ is known to possess non-trivial closed invariant subspaces is considerably enlarged, extending previous results by Davie \cite{DAVIE_inv_subs_bishop}, MacDonald \cite{MACDONALD_inv_bishop_type} and Flattot \cite{flattot}. Furthermore, we essentially show that when our approach fails to produce invariant subspaces it is actually because Atzmon Theorem cannot be applied. Finally, upon applying arithmetical bounds obtained, we deduce local spectral properties of Bishop operators proving, in particular, that neither of them satisfy the \emph{Dunford property}~$(C)$.
\end{abstract}

\keywords{Bishop operators, invariant subspace problem, Dunford property~$(C)$}

\maketitle

%% main text
\section{Introduction}

Perhaps, one of the best-known unsolved problems in Functional Analysis is the \emph{Invariant
Subspace Problem:}

\begin{quotation}
\emph{Does every bounded linear operator on a (separable, infinite-dimensional,
complex) Hilbert space have a non-trivial  closed invariant subspace?}
\end{quotation}

In this regard, one of the earliest and most elegant invariant subspace theorems is the result of von Neumann in the Hilbert space setting (unpublished) and Aronszajn and Smith \cite{Aron1} in the context of Banach spaces which states, in particular, that compact operators have non-trivial closed invariant subspaces. In 1973 operator theorists were stunned by the generalization achieved by Lomonosov \cite{Lomonosov}, who proved one of the most general positive results to provide invariant subspaces, namely: \emph{any linear bounded operator $T$ acting on a Banach space commuting with a non-zero compact operator has a non-trivial closed invariant subspace.} Moreover, $T$ has a non-trivial  \emph{hyperinvariant} closed subspace, that is, a closed subspace which is invariant under every operator in the commutant of~$T$. Accordingly, any linear bounded operator $T$ has a non-trivial  invariant closed subspace if it commutes with a non-scalar operator that commutes with a nonzero compact operator. But, it was not until 1980 that Hadwin, Nordgren, Radjavi, Rosenthal \cite{HNRR} showed the existence of an operator in the Hilbert space setting having non-trivial invariant subspaces to which Lomonosov's Theorem does not apply.\smallskip

In the meantime, two remarkable counterexamples came into scene. Firstly, in 1975 Enflo announced in the S\'eminaire Maurey-Schwarz at the \'Ecole Polytechnique in Paris the existence of a separable Banach space and a linear bounded operator $T$  without non-trivial  closed invariant subspaces; though its publication was delayed for more than ten years \cite{Enflo}.  Then, in 1985, Read~\cite{Re85} constructed a bounded linear operator without non-trivial  closed invariant subspaces in the well-known sequence space $\ell^1$ (see also \cite{Re84} for a previous construction). Indeed, the construction carried over in \cite{Re85} is the first known example of such an operator on any of the classical Banach spaces.\smallskip

For decades a number of authors worked on extending these results to more general classes of operators, and significant progress has been made by developing deep tools in allied areas like Harmonic Analysis, Function Theory or finite dimension Linear Algebra in the framework of Operator Theory. Among different approaches, two have been specially fruitful in order to provide invariant subspaces for a given operator: one coming from the behavior of such operator acting on finite dimensional subspaces leading to the concept of quasitriangular operators.
The other one, mostly based on function theory techniques, consist of developing an ``appropriate" functional calculus which allows to produce hyperinvariant subspaces from the fact that two non-zero functions may have pointwise zero product.\smallskip

Regarding the first approach,  recall that a linear bounded operator $T$  in a separable infinite dimensional Hilbert space $H$  is said to be \emph{quasitriangular} if there exists an increasing sequence $(P_n)_{n=1}^{\infty}$ of finite rank projections converging to the identity $I$ strongly as $n\to \infty$ such that
$$\|TP_n -P_n TP_n \|\to 0,  \quad \mbox{ as } n\to \infty.$$
Based on Aronszajn and Smith's Theorem, Halmos \cite{Halmos} introduced the concept of quasitriangular operators in the sixties to prove the existence of invariant subspaces. It is completely apparent that given a triangular operator in $H$, that is, a linear bounded  operator which admits a representation as an upper triangular matrix with respect to a suitable orthonormal basis, there exists an increasing sequence
$(P_n)_{n=1}^{\infty}$ of finite rank projections converging to the identity $I$ strongly as $n\to \infty$ such that
$$TP_n-P_nTP_n=(I-P_n)TP_n=0, \quad \mbox{ for all } n=0, 1, 2,...$$
Hence, the definition of quasitriangularity says, roughly speaking, that $T$ has a sequence of ``approximately invariant'' finite-dimensional subspaces. Compact operators, operators with finite spectrum, decomposable operators or compact perturbations of normal operators are examples of quasitriangular operators. On the other hand, the shift operator of index one is not quasitriangular; and remarkable results due to Douglas and Pearcy \cite{Douglas-Pearcy} and Apostol, Foias and Voiculescu \cite{AFV-1} yield that the \emph{Invariant Subspace Problem} is reduced to be proved for quasitriangular operators (see Herrero's book \cite{Herrero} for more on the subject).\smallskip

In what the second approach refers, \emph{Beurling algebras} have played an important role in this context. The starting point was a theorem of Wermer \cite{wermer} in 1952 which states that an invertible linear bounded operator $T$ on $H$ such that the series
$$
\sum_{n=-\infty}^{\infty} \frac{\log \|T^n\|}{1+n^2}
$$
converges and its spectrum is not a singleton is either a multiple of the identity or has a non-trivial hyperinvariant closed subspace. A stronger variant was proved by Atzmon (see \cite{Atzmon-1980} and \cite{Atzmon-1984}, for instance). The common feature is the definition of a functional calculus, particularly in \cite{Atzmon-1984} mapping an algebra $\mathcal{A}_{\rho}$ of functions defined on the unit circle $\mathbb{T}$ into $\mathcal{L}(H)$, the Banach algebra of linear bounded operators acting on $H$. For more on the subject we refer to the classical monograph by Radjavi and Rosenthal \cite{RR} and the recent one by Chalendar and Partington \cite{Chalendar-Partington_book}.\smallskip

The main goal of this work is addressing both approaches in the context of Bishop operators. Given an irrational number $\alpha\in (0,1)$, recall that the Bishop operator $T_{\alpha}$ is defined on $L^p[0,1)$, $1\leq p\leq \infty$, by
\[
 T_{\alpha} f(t)= t f(\{t+\alpha\}), \qquad t\in [0,1),
\]
where $\{\,\cdot\,\}$ denotes the fractional part. As explained by Davie \cite{DAVIE_inv_subs_bishop}, these examples were suggested by Bishop as candidates for operators without  non-trivial  closed invariant subspaces. By means of a functional calculus approach,  Davie proved the existence of non-trivial closed \emph{hyperinvariant} subspaces in $L^2[0,1)$ for $T_{\alpha}$ whenever $\alpha$ is a non-Liouville irrational number in $(0,1)$.  Later, subsequent extensions strengthening it due to Blecher and Davie \cite{Blecher-Davie}, MacDonald \cite{MACDONALD_inv_bishop_type}, \cite{MACDONALD_decomposable_weighted} and Flattot  \cite{flattot}  provided a large class of irrationals $\alpha\in (0,1)$ including some Liouville numbers.\smallskip

Our main results in this context will be showing, on one hand, that every Bishop operator $T_{\alpha}$ as well as its adjoint $T_{\alpha}^*$ are quasitriangular operators in $L^2[0,1)$, having therefore a good  approximation by approximately invariant finite-dimensional subspaces. On the other hand, in Theorem \ref{te:main_result} we will extend the class of irrationals $\alpha\in (0,1)$ such that $T_{\alpha}$ has non-trivial closed hyperinvariant subspaces in $L^p[0,1)$ by considering arithmetical techniques which allow to strengthen the analysis of the behavior of certain functions associated to the functional calculus model. Indeed, those Liouville irrationals $\alpha$ escaping the condition set up in Theorem \ref{te:main_result} are so extreme that Theorem \ref{te:atzmon_limit} will show that, essentially, Atzmon Theorem cannot be applied for such irrationals. Roughly speaking, we prove that when our approach fails to produce invariant subspaces it is actually because Atzmon Theorem cannot be applied, what establishes, somehow, the threshold limit in the growth of the denominators of the convergents of those $\alpha$. In some sense, this corroborates an approach to look for invariant subspaces for every $T_\alpha$ based on different functional analytic tools; which will be the goal in the final section.\smallskip
 
On the other hand, observe that by Jarn\'{\i}k-Besicovitch Theorem (see \cite[Section 5.5]{bugeaud}, for instance), Liouville irrationals form a set of vanishing Hausdorff dimension. Nevertheless, it is possible to measure the difference between those cases covered by Davie and Flattot Theorems and  Theorem \ref{te:main_result}, by considering the logarithmic Hausdorff dimension through the use of the family of functions $|\log x|^{-s}$ (instead of the usual $x^s$). With such a dimension, by means of  \cite[Theorem 6.8]{bugeaud}, one can easily deduce that the set of exceptions in Davie, Flattot and our case have dimension $\infty, 4$ and $2$, respectively.
\smallskip

The rest of the manuscript is organized as follows. In Section \ref{Section 2} we introduce some preliminaries and prove that every Bishop operator $T_{\alpha}$ is biquasitriangular in $L^2[0,1)$. In Section \ref{Section 3}, we recall the functional calculus provided by Davie and its extension by Atzmon (a good reference for that is \cite[Chapter 5]{Chalendar-Partington_book}); and construct explicit functions in $L^p[0,1)$ which allow to extend the class of Liouville numbers $\alpha\in (0,1)$ such that $T_{\alpha}$ has non-trivial closed hyperinvariant subspaces. In Section \ref{Section 4}, we show the limits of Atzmon's Theorem approach in the context of Bishop operators. Finally, in Section \ref{Section 5} we discuss some consequences regarding spectral subspaces, which constitute a class of invariant linear manifolds to look for non-trivial closed hyperinvariant subspaces.  We will show, in particular, that $T_{\alpha}$ does not satisfy the \emph{Dunford property $(C)$} in  $L^p[0,1)$ by exhibiting that some spectral subspaces are not closed.

\subsection*{A word about notation} In this paper we employ  a form of Vinogradov's notation. We write $A\ll B$ meaning $|A|\le K |B|$ for some absolute constant $K>0$. Note that in particular we have $A=O(B)$.

\section{Quasitriangular Bishop operators}\label{Section 2}

As mentioned in the introduction, a linear bounded operator $T$ in a separable infinite dimensional Hilbert space $H$  is quasitriangular if there exists an increasing sequence $(P_n)_{n=1}^{\infty}$ of finite rank projections converging to the identity $I$ strongly as $n\to \infty$ such that
$$\|TP_n -P_n TP_n \|\to 0,  \quad \mbox{ as } n\to \infty.$$
In this Section, we show that every Bishop operator $T_{\alpha}$ is indeed biquasitriangular in $L^2[0,1)$, that is, both $T_{\alpha}$ and its adjoint
$T_{\alpha}^*$ are quasitriangular operators. We will derive some consequences regarding the approximation of $T_{\alpha}$.\smallskip

In order to prove the result, we will consider semi-Fredholm operators. Let $T$ be in  $\mathcal{L}(H)$ and denote by $\Ker T$ and $\Ran T $ its kernel and its range, respectively. Recall that $T$ is called \emph{semi-Fredholm} if $\Ran T $ is closed and either
the dimension of the kernel of $T$ or the dimension of the kernel of the adjoint $T^*$ is finite. In this case, the \emph{index} of $T$ is defined by
$$
{\rm index\; } T= \dim (\Ker T) - \dim (\Ker T^*)
$$
The following remarkable theorem by Douglas and Pearcy \cite{Douglas-Pearcy} and Apostol, Foias and Voiculescu \cite{AFV-1} (see also \cite[Chapter 6]{Herrero}) is the key fact relating semi-Fredholm operators to quasitriangular ones:

\begin{theorem}[Douglas, Pearcy- Apostol, Foias, Voiculescu]
An operator $T$  is quasitriangular in $H$ if and only if ${\rm index }(T-\lambda I)\geq 0$  for each complex number $\lambda \in \mathbb{C}$  such that $T-\lambda I$  is semi-Fredholm.
\end{theorem}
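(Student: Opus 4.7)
The statement is an equivalence whose two directions are very different in depth; my plan addresses them separately.

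For the forward direction I would follow the line of Halmos's original argument ruling out quasitriangularity of the unilateral shift. Suppose $T$ is quasitriangular with projections $(P_n)$ satisfying $\|(I-P_n)TP_n\|\to 0$, and suppose for contradiction that $T-\lambda I$ is semi-Fredholm with $\mathrm{index}(T-\lambda I)=-k<0$. Writing $m=\dim\Ker(T-\lambda I)$, so that $\dim\Ker(T^{*}-\bar\lambda I)=m+k$, I would study the compressions $A_n:=P_n(T-\lambda I)P_n$ as finite-rank operators on $P_nH$. Two observations are decisive. First, because $\Ran(T-\lambda I)$ is closed and the Fredholm index is finite, $(T-\lambda I)$ is bounded below on $\Ker(T-\lambda I)^{\perp}$; combined with $\|(T-\lambda I)\xi-A_n\xi\|\leq\|(I-P_n)TP_n\|\|\xi\|$ for $\xi\in P_nH$, this forces every approximate null direction of $A_n$ to sit asymptotically inside the $m$-dimensional space $\Ker(T-\lambda I)$. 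Second, projecting an orthonormal basis of $\Ker(T^{*}-\bar\lambda I)$ onto $P_nH$ yields $m+k$ approximately orthonormal vectors on which $A_n^{*}$ is approximately zero, so $A_n^{*}$ has at least $m+k$ small singular values; since the singular values of $A_n$ and $A_n^{*}$ coincide, $A_n$ has at least $m+k$ approximate null directions. Combining both observations produces $m+k$ approximately orthonormal vectors inside an $m$-dimensional space, which is impossible when $k\geq 1$.

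For the converse direction, the deep contribution of Apostol, Foias and Voiculescu, my plan is to deploy the structure theory built around the Brown--Douglas--Fillmore program. The hypothesis that $\mathrm{index}(T-\lambda I)\geq 0$ throughout the semi-Fredholm domain labels every connected component of $\C\setminus\sigma_{e}(T)$ by a non-negative integer. I would then invoke Voiculescu's noncommutative Weyl--von Neumann theorem to replace $T$, modulo a compact perturbation and unitary equivalence, by a canonical model built from the essential spectrum together with the index data. Because the index labels are non-negative, this canonical model decomposes as the direct sum of a normal part (which is approximated by a diagonal plus compact by Weyl--von Neumann--Berg) and finite-multiplicity backward shifts, both of which are manifestly quasitriangular. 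Pulling the approximating finite-rank projections back through the approximate unitary equivalence then produces the required sequence $(P_n)$ for $T$.

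The genuine obstacle is the inductive construction of the projections $(P_n)$ in the converse: one must simultaneously ensure $P_n\uparrow I$ strongly, the decay $\|(I-P_n)TP_n\|\to 0$, and fidelity to the essential spectral picture of $T$. Balancing these three conflicting demands is precisely where the full Brown--Douglas--Fillmore / Voiculescu machinery becomes indispensable, and any more elementary approach founders on operators whose essential spectrum has intricate topological structure.
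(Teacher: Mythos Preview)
The paper does not prove this theorem; it is quoted from the literature (Douglas--Pearcy and Apostol--Foia\c{s}--Voiculescu, with a pointer to Herrero's book) and then immediately \emph{applied} to show that Bishop operators are biquasitriangular. So there is no ``paper's own proof'' to compare your proposal against.

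That said, your outline is broadly in the right spirit. The forward direction you sketch is essentially the Douglas--Pearcy argument: quasitriangularity forces the compressions $P_n(T-\lambda I)P_n$ to inherit, approximately, the null-space/cokernel asymmetry of $T-\lambda I$, and a negative index produces too many approximate null directions for a finite-dimensional operator. One point to tighten: when you project an orthonormal basis of $\Ker(T^*-\bar\lambda I)$ into $P_nH$, you should say explicitly why the projections remain approximately orthonormal (this uses only $P_n\to I$ strongly) and why $A_n^*P_n\eta\to 0$ (this uses $(T^*-\bar\lambda I)\eta=0$ together with $\|(I-P_n)\eta\|\to 0$, not the quasitriangularity estimate, which goes the wrong way for $T^*$).

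For the converse, your plan to invoke Voiculescu's theorem and a BDF-type canonical model is anachronistic but legitimate as a modern route; the original Apostol--Foia\c{s}--Voiculescu argument predates that machinery and proceeds by a direct (and intricate) construction of the approximating projections from the spectral picture. Your description of the model as ``normal part plus backward shifts'' is morally right when all indices are non-negative, but as written it is only a slogan: turning it into a proof requires the full absorption/approximate-unitary-equivalence apparatus, and you have correctly flagged this as the hard step rather than actually carried it out.
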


We are in position now the prove the following result:

\begin{theorem}\label{thm-quasitriangular}
For every irrational $\alpha \in (0,1)$, the Bishop operator $T_{\alpha}$ in $L^2[0,1)$ is biquasitriangular.
\end{theorem}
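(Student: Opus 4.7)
The plan is to apply the Douglas--Pearcy/Apostol--Foias--Voiculescu theorem above to both $T_\alpha$ and $T_\alpha^*$. Since $(T_\alpha-\lambda I)^* = T_\alpha^*-\overline{\lambda}\,I$ and $\mathrm{index}((T_\alpha-\lambda I)^*) = -\mathrm{index}(T_\alpha-\lambda I)$, biquasitriangularity is equivalent to $\mathrm{index}(T_\alpha-\lambda I)=0$ for every $\lambda$ at which $T_\alpha-\lambda I$ is semi-Fredholm. The approach I would take is to prove that
\[
\sigma_p(T_\alpha) \;=\; \sigma_p(T_\alpha^*) \;=\; \emptyset,
\]
i.e.\ neither $T_\alpha$ nor its adjoint has any eigenvalue. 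Granted this, if $T_\alpha-\lambda I$ is semi-Fredholm then it has closed range and $\ker(T_\alpha-\lambda I)=0$, while its cokernel $[\mathrm{range}(T_\alpha-\lambda I)]^{\perp} = \ker(T_\alpha^*-\overline{\lambda}\,I)=0$; thus the closed range is dense, hence all of $L^2[0,1)$, so $T_\alpha-\lambda I$ is bijective and the index is $0$.

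To establish $\sigma_p(T_\alpha)=\emptyset$, suppose $T_\alpha f=\lambda f$ with $f\in L^2[0,1)$ non-zero. If $\lambda=0$ then $tf(\{t+\alpha\})=0$ a.e., forcing $f\equiv 0$. If $\lambda\neq 0$, ergodicity of the $\alpha$-rotation makes the zero set of $f$ either null or full, so $f\neq 0$ a.e.; iterating the eigenvalue relation gives $f(t)=\lambda^{-n}\prod_{k=0}^{n-1}\{t+k\alpha\}\,f(\{t+n\alpha\})$, and taking logarithms,
\[
\log|f(\{t+n\alpha\})| \;-\; \log|f(t)| \;=\; n\log|\lambda| \,-\, \sum_{k=0}^{n-1}\log\{t+k\alpha\}.
\]
Since $\int_0^1\log s\,ds=-1$, Birkhoff's ergodic theorem gives $\tfrac{1}{n}\bigl[\log|f(\{t+n\alpha\})|-\log|f(t)|\bigr]\to \log|\lambda|+1$ for a.e.\ $t$. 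When $|\lambda|>e^{-1}$ this limit is strictly positive, so by Egorov's theorem $|f|$ grows exponentially on a set of measure arbitrarily close to $1$, contradicting $f\in L^2$; when $|\lambda|<e^{-1}$ the limit is strictly negative and a symmetric argument combining Egorov with the absolute continuity of the Lebesgue integral forces $\|f\|_2=0$. When $|\lambda|=e^{-1}$ the relation takes the cohomological form $h-h\circ T=1+\log t$ with $h=\log|f|$ and $T$ the $\alpha$-rotation, so existence of a measurable solution $h$ would make the mean-zero function $\phi(t)=1+\log t$ a measurable coboundary; its logarithmic singularity at $0$ and the Diophantine close returns of $\{k\alpha\}$ to $0$ force the Birkhoff sums $S_n\phi$ to be unbounded along a.e.\ orbit, obstructing the Gottschalk--Hedlund cohomological equation. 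An identical argument, carried out for $T_\alpha^* g(s) = \{s-\alpha\}\,g(\{s-\alpha\})$ (a Bishop-type operator built from the backward rotation), yields $\sigma_p(T_\alpha^*)=\emptyset$.

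Combining the two ingredients, $T_\alpha-\lambda I$ is invertible at every semi-Fredholm point and its index is identically $0$ there, so the Douglas--Pearcy/Apostol--Foias--Voiculescu theorem delivers the biquasitriangularity of $T_\alpha$. The main obstacle in this plan is the boundary case $|\lambda|=e^{-1}$: for $|\lambda|\neq e^{-1}$ the Birkhoff argument yields a clean contradiction, but on the critical circle one has to translate the non-existence of eigenvectors into the absence of a measurable coboundary for $1+\log t$ under the irrational rotation---an essentially Diophantine issue which is nevertheless tractable via standard estimates on orbit close returns to $0$.
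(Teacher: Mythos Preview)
Your approach is exactly the paper's: apply the Apostol--Foia\c{s}--Voiculescu characterization and use $\sigma_p(T_\alpha)=\sigma_p(T_\alpha^*)=\emptyset$ to conclude that $T_\alpha-\lambda I$ is invertible (hence of index zero) at every semi-Fredholm point. The only difference is that the paper does not rederive the emptiness of the point spectrum---it treats this as a known fact due to Parrott---so its entire proof is four lines; your self-flagged obstacle on the critical circle $|\lambda|=e^{-1}$ is real (and note that unboundedness of $S_n\phi$ along a.e.\ orbit is not by itself the correct obstruction to a \emph{measurable} coboundary---one needs non-tightness of the laws of $S_n\phi$), but you can sidestep it entirely by citing Parrott.
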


\begin{proof}
Let $\lambda \in \mathbb{C}$  such that $T_{\alpha}-\lambda I$  is semi-Fredholm. In particular, both $T_{\alpha}-\lambda I$ and its adjoint $T^*_{\alpha}-\overline{\lambda} I$ have closed range.  Since the point spectrum of $T_{\alpha}$ is empty, one has that $\lambda$ is in the resolvent of $T_{\alpha}$, that is, $T_{\alpha}-\lambda I$ is invertible. Hence, ${\rm index }(T_{\alpha}-\lambda I)=0$.
Since ${\rm index }(T^*_{\alpha}-\overline{\lambda} I)= - {\rm index }(T_{\alpha}-\lambda I)$, it follows that both $T_{\alpha}$ and its adjoint $T_{\alpha}^*$ are quasitriangular operators in $L^2[0,1)$, and the theorem
is proved.
\end{proof}

A few consequences may be derived from Theorem \ref{thm-quasitriangular} in terms of approximation of $T_{\alpha}$ by linear bounded operators. For instance, by means of \cite[Theorem 6.15]{Herrero}, one has straightforwardly that for every irrational $\alpha \in (0,1)$, the operator $T_{\alpha}$ is the norm limit of algebraic operators. Recall that an operator is called \emph{algebraic} if there exists a polynomial $p$ such that $p(T)$ is the zero operator. Clearly algebraic operators have non-trivial closed invariant subspaces. At this regard, it is worthy to point out that indeed, for every irrational $\alpha \in (0,1)$,  $T_{\alpha}$ is norm limit of nilpotent operators in $L^p[0,1)$. Namely, for any positive integer $n$, let $\phi_n(t)= t \cdot \, 1_{[1/n,1)}(t)$ for $t\in [0,1)$ and consider the Bishop-type operator $T_{\phi_n, \, \alpha}$  defined  by
$$
T_{\phi_n, \, \alpha} f(t) = \phi_n(t) f(\{t+\alpha\}), \qquad t\in [0,1),
$$
for $f\in L^p[0,1)$, $1\leq p<\infty$. Clearly, $(T_{\phi_n, \, \alpha})_{n\geq 1}$ are linear bounded operators in $L^p[0,1)$ converging in norm to $T_{\alpha}$. Moreover, having in mind that {$\tau_\alpha(t)=\{t+\alpha\}$} with $\alpha$ irrational in $[0,1)$ is an ergodic transformation in $L^p[0,1)$, one deduces that $T_{\phi_n, \, \alpha}$ is nilpotent for every $n\geq 1$.

\medskip

\begin{remark}
The fact that for every irrational $\alpha \in (0,1)$, both the Bishop operator $T_{\alpha}$ and its adjoint $T_{\alpha}^*$ in $L^2[0,1)$ are norm limit of nilpotent operators in $L^2[0,1)$ could be derived upon applying a theorem of Apostol, Foias and Voiculescu \cite{AFV-1} which states that a linear bounded operator $T$  is the norm limit of nilpotent operators if and only if it is biquasitriangular and both its spectrum and essential spectrum are connected and contain 0.
The spectrum of $T_{\alpha}$ was first studied by Parrott \cite{PARROTT_thesis} in his Ph.D. thesis, who analyzed the different parts of the spectrum and proved, in particular, that
\begin{equation}\label{spectrum of Bishop}
\sigma(T_{\alpha})=\{\lambda\in \mathbb{C}:\; |\lambda|\leq e^{-1}\}
\end{equation} for any irrational $\alpha\in (0,1)$.
Moreover, he also showed that the spectrum $\sigma(T_{\alpha})$ coincides with the essential spectrum $\sigma_e(T_{\alpha})$. Recall that if $\mathcal{K}(H)$ denotes the two-sided ideal of the compact operators in $H$, the {\em essential spectrum} of a linear bounded operator $T$ consists of the set of complex numbers $\lambda\in \mathbb{C}$ such that $T-\lambda I$ is not invertible modulo compact operators, that is, $T-\lambda I$ is not invertible in the {\em Calkin algebra} $\mathcal{L}(H)/ \mathcal{K}(H)$ (see Conway's monograph \cite{Con}, for instance, for more on the essential spectrum).
\end{remark}

\section{Bishop operators \texorpdfstring{$T_{\alpha}$}{Ta} with non-trivial invariant subspaces: enlarging the class of irrationals \texorpdfstring{$\alpha$}{a} }\label{Section 3}

In this Section, we extend the set of  known values of $\alpha$ for which the Bishop operator  $T_{\alpha}$
acting on $L^p[0,1)$, $1\leq p<\infty$, has non-trivial  closed invariant subspaces (observe that for $p=\infty$, the existence follows since $L^{\infty}[0,1)$ is not separable).

The main goal of this section will be providing a careful approach to those irrationals in order to apply Atzmon's Theorem \cite{Atzmon-1984}, by means of a functional calculus based on Beurling algebras, that is, algebras of continuous functions on the unit circle $\mathbb{T}$ with a restricted growth of the Fourier sequences. In order to consider such approach, we will consider the operator
\begin{equation}\label{normalization}
 \widetilde{T}_{\alpha}= e \; T_{\alpha}
\end{equation}
which, by means of the Spectral Theorem and equation (\ref{spectrum of Bishop}), satisfies that the spectrum $\sigma (\widetilde{T}_{\alpha})=\overline{\mathbb{D}}$.
For the sake of completeness, we recall some  results
regarding Atzmon's Theorem and Flattot's result \cite{flattot} to state the result in context. We refer to Chapter 5 in \cite{Chalendar-Partington_book} for a complete account of it.

\subsection{Beurling algebras and a theorem of Atzmon}

Given $(\rho_n)_{n\in \mathbb Z}$ a sequence in $[1, +\infty)$, let $\mathcal{A}_{\rho}$ consists of the Banach space of functions $f$  continuous in $\mathbb{T}$ such that   the norm is given by
$$
\|f\|_{\rho}=\sum_{n\in \mathbb{Z}} |\hat{f}(n)| \rho_n,
$$
where $(\hat{f}(n))_{n\in \mathbb{Z}}$ denotes the sequence of Fourier coefficients of $f$.
Observe that if $(\log \rho_n)_{n\in \mathbb{Z}}$ is sub-additive, that is, if $\rho_{m+n}\leq \rho_n\, \rho_m$ for all $n,\, m \in \mathbb{Z}$, then
$\mathcal{A}_{\rho}$ is a unital Banach algebra under pointwise multiplication. Note that the function algebra $\mathcal{A}_{\rho}$ is isometrically isomorphic to the weighted convolution algebra $\ell^1(\mathbb{Z}, (\rho_n)_n)$; commonly known as \emph{Beurling algebra}.
%Observe that the dual of the Banach space  $\mathcal{A}_{\rho}$ regarded as a weighted $\ell^1$,  may be identified with a weighted $\ell^{\infty}$ space.

\begin{definition}
A sequence of real numbers $(\rho_n)_{n\in \mathbb Z}$ such that $\rho_0=1$ and $\rho_n\ge 1$ for all $n\in\mathbb Z$, is called a \emph{Beurling sequence} if
\[
 \rho_{m+n}\le \rho_m \rho_n   \qquad \forall m,n\in \mathbb Z
 \qquad\text{and}\qquad
 \sum_{n\in\mathbb Z} \frac{\log \rho_n}{1+n^2} <\infty.
\]
\end{definition}

One of the key results regarding the Banach algebra $\mathcal{A}_{\rho}$ when $\rho=(\rho_n)_{n\in \mathbb Z}$ is a Beurling sequence is that $f\in \mathcal{A}_{\rho}$ is invertible if and only if $f(e^{i\theta})\neq 0$ for all $\theta\in [0, 2\pi]$. Moreover, the Banach algebra $\mathcal{A}_{\rho}$ is regular \cite[Theorem 5.1.7]{Chalendar-Partington_book}. Recall that a  function algebra $\mathcal{A}$ on a compact space $X$ is said to be {\em regular} if for all $p\in X$ and all compact subsets $K$ of $X$ with $p\not \in K$, there exists $f\in \mathcal{A}$ such that $f(p)=1$ and $f=0$ in $K$. One advantage of regularity in a function algebra on $\mathbb{T}$  is that it enables to construct two non-zero functions whose product is identically zero; and this, combined with a functional calculus argument, gives a strategy for obtaining invariant subspaces. This was pursued by Davie \cite{DAVIE_inv_subs_bishop} and refined by MacDonald \cite{MACDONALD_inv_bishop_type} and Flattot \cite{flattot} by means of Atzmon's theorem. In order to state it, let us recall the definition of $\rho$-regular numbers:

\begin{definition}
Let $\rho=(\rho_n)_{n\in \mathbb Z}$ be a Beurling sequence. An irrational $\alpha$  is said to be $\rho$-regular if there exists $m_0\in \mathbb{N}$ and two functions $h_1$, $h_2$ satisfying
\begin{equation*}
\frac{h_1(n)\, \log \rho_n}{n\, \log n}\to \infty \; \; \mbox{ and } \; \;
\frac{h_2(n)\, \log n}{\log \rho_n}\to 0 \; \mbox{ as } n\to \infty,
\end{equation*}
such that, for all $n>n_0$, there exists $p, q\in \mathbb{N}$, with $(p,q)=1$, satisfying
$$
\left | \alpha- \frac{p}{q} \right | \leq \frac{1}{q^2} \; \mbox{ and }
\;
%$$
%$$
h_1(n)\leq q\leq h_2(n).
$$
\end{definition}
Davie \cite{DAVIE_inv_subs_bishop} made the choices $\log \rho_n= |n|^\rho$  with $\frac{1}{2}<\rho<1$, $h_1(n)=n^{\rho'}$ with $0<\rho'<\frac{1}{2}$ and
$h_2(n)=n^{1/2}$, which characterized the non-Liouville numbers. Flattot \cite[Theorem 4.6]{flattot} extended it to a larger class including some Liouville numbers, by taking
$h_1(n)= (\log n)^{2+\varepsilon}$, $h_2(n)= n/h_1(n)$
and
$\log \rho_n=h_2(n)\sqrt{h_1(n)}$. In particular, using the language of continued fractions, if
$(a_j/q_j)_{j=0}^\infty$ are the convergents of $\alpha$, the limit of his result
(see \cite[Remark 5.4]{flattot})
gives the existence of non-trivial  invariant subspaces for $T_\alpha$
when
\begin{equation}\label{eq:flattot}
 \log q_{j+1} =O\big(q_j^{1/2-\varepsilon}\big)    \qquad
 \text{for any }\varepsilon>0.
\end{equation}
Note that the condition \eqref{eq:flattot} holds for instance for the classical Liouville number $\alpha=\sum_{j=0}^{\infty} 10^{-j!}$.

As mentioned, Atzmon's Theorem \cite{Atzmon-1984} was a key result in MacDonald  and Flattot approaches.  In order to state it,  we say that a sequence $(a_n)_{n\in \mathbb Z}$ is \emph{dominated} by another sequence, both non-negative, $(b_n)_{n\in \mathbb Z}$ if $a_n\le c \, b_n$ for all $n\in\mathbb Z$ and some constant $c>0$.

\begin{theorem}[Atzmon \cite{Atzmon-1984}]\label{te:atzmon}
Let $\mathcal{X}$ be a Banach space and $T$ a linear bounded operator in $\mathcal{X}$. Suppose that there exist sequences $(x_n)_{n\in \mathbb{Z}}$ in $\mathcal{X}$ and $(y_n)_{n\in \mathbb{Z}}$ in $\mathcal{X}^*$ with $x_0\neq 0$, $y_0\neq 0$ such that
$$
T x_n= x_{n+1} \; \mbox{ and } \; T^* y_n= y_{n+1}
$$
for all $n\in \mathbb{Z}$. Suppose further that both sequences  $(\|x_n\|)_{n\in \mathbb{Z}}$ and  $(\|y_n\|)_{n\in \mathbb{Z}}$ are dominated by Beurling sequences, and there are at least $\lambda \in \mathbb{T}$ at which the following vector-valued functions $G_x$ and $G_y$ defined on $\mathbb{C}\setminus \mathbb{T}$ do not both possess analytic continuation into a neighborhood of $\lambda$:
\begin{equation}\label{G_x}
G_x(z)=\left \{ \begin {array}{ll}
\sum_{n=1}^{\infty} x_{-n} z^{n-1} & \mbox{ if } |z|<1,\\
\noalign{\medskip}
- \sum_{n=-\infty}^{0} x_{-n} z^{n-1} & \mbox{ if } |z|>1,
\end{array}
\right.
\end{equation}
\begin{equation}\label{G_y}
G_y(z)=\left \{ \begin {array}{ll}
\sum_{n=1}^{\infty} y_{-n} z^{n-1} & \mbox{ if } |z|<1,\\
\noalign{\medskip}
- \sum_{n=-\infty}^{0} y_{-n} z^{n-1} & \mbox{ if } |z|>1,
\end{array}
\right.
\end{equation}
Then either $T$ is a multiple of the identity or it has a non-trivial hyperinvariant subspace.
\end{theorem}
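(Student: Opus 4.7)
The plan is to build a continuous functional calculus $f\mapsto f(T)$ from a Beurling algebra $\mathcal{A}_\rho$ into the bicommutant of $T$, use the regularity of $\mathcal{A}_\rho$ to produce $\phi,\psi\in\mathcal{A}_\rho$ with $\phi\psi\equiv 0$ but $\phi(T)\neq 0$ and $\psi(T)\neq 0$, and then read off a non-trivial hyperinvariant subspace as the closure of the range of $\psi(T)$, which is contained in the kernel of $\phi(T)$. If every such separation is obstructed, the obstruction should collapse the calculus to a point evaluation at some $\mu\in\mathbb{T}$, and the identification $z\mapsto T$ then forces $T=\mu I$.

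First I would replace the two Beurling sequences majorizing $(\|x_n\|)$ and $(\|y_n\|)$ by a single common Beurling majorant $\rho=(\rho_n)_{n\in\mathbb{Z}}$, achieved by a termwise-maximum or convolution argument while preserving both submultiplicativity and the summability $\sum_n\log\rho_n/(1+n^2)<\infty$. The shift relations $Tx_n=x_{n+1}$ and $T^*y_n=y_{n+1}$ force the identity $\langle x_n,y_0\rangle=\langle x_0,y_n\rangle$, so the scalars $c_n:=\langle x_n,y_0\rangle$ are well defined and satisfy $|c_n|\ll\rho_n$. For $f\in\mathcal{A}_\rho$ the series $\Lambda_{x_0,y_0}(f):=\sum_{n\in\mathbb{Z}}\hat{f}(n)c_n$ converges absolutely with $|\Lambda_{x_0,y_0}(f)|\ll\|f\|_\rho$, and the substitution $f\mapsto z\cdot f$ shifts indices in a way that encodes the action of $T$. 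Replacing $x_0$ by $x_j$ and $y_0$ by $y_k$ yields a bilinear pairing on the linear spans of $\{x_n\}$ and $\{y_n\}$; by the uniform growth bounds and a density argument, it extends to a bounded bilinear form on $\mathcal{X}\times\mathcal{X}^*$ and hence defines an operator $f(T)\in\mathcal{L}(\mathcal{X})$. A direct check on monomials shows that $f\mapsto f(T)$ is a continuous algebra homomorphism sending the coordinate function $z$ to $T$, with image inside the bicommutant of $T$.

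The crucial step is translating the hypothesis on $G_x$ and $G_y$ into a non-degeneracy statement for this calculus. The functions $G_x$ and $G_y$ are, up to a change of variable, the vector-valued Cauchy transforms of the sequences $(x_{-n})$ and $(y_{-n})$, and a Paley--Wiener-type duality for $\mathcal{A}_\rho$ identifies the failure of analytic continuation of $G_x$ (resp.\ $G_y$) across an open arc $J\subset\mathbb{T}$ with the non-vanishing of the $\mathcal{A}_\rho$-distribution $f\mapsto\sum_n\hat{f}(n)x_{-n}$ (resp.\ $f\mapsto\sum_n\hat{f}(n)y_{-n}$) on test functions $f\in\mathcal{A}_\rho$ supported in $J$. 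Since at least one of $G_x, G_y$ fails to continue analytically across the distinguished $\lambda\in\mathbb{T}$, there exists $f\in\mathcal{A}_\rho$ supported in an arbitrarily small neighborhood of $\lambda$ for which $f(T)\neq 0$.

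To conclude, I would invoke the regularity of $\mathcal{A}_\rho$ recalled in \cite[Theorem 5.1.7]{Chalendar-Partington_book} to pick $\phi\in\mathcal{A}_\rho$ supported in a short closed arc $K\ni\lambda$ with $\phi(T)\neq 0$, and $\psi\in\mathcal{A}_\rho$ vanishing on $K$ but equal to $1$ on some disjoint arc of $\mathbb{T}$. Then $\phi\psi\equiv 0$ yields $\phi(T)\psi(T)=0$, so the closure of the range of $\psi(T)$ lies inside the kernel of $\phi(T)$, the latter being proper since $\phi(T)\neq 0$; this closure is automatically hyperinvariant for $T$ and is non-zero unless $\psi(T)=0$. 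If the last alternative held for every admissible configuration, the calculus would reduce to a point evaluation $f\mapsto f(\mu)I$ at some $\mu\in\mathbb{T}$ and hence $T=\mu I$. I expect the main technical obstacle to be precisely the duality step that converts the failure of analytic continuation of $G_x, G_y$ at $\lambda$ into the non-vanishing of $\phi(T)$: the Paley--Wiener/hyperfunction analysis in the Beurling setting is the technical heart of Atzmon's original paper \cite{Atzmon-1984}.
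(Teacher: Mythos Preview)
The paper does not prove Theorem~\ref{te:atzmon}; it is quoted verbatim from Atzmon~\cite{Atzmon-1984} as a black box and used as input (for instance to derive Theorem~\ref{te:atzmon_macdonald}). There is therefore no ``paper's own proof'' to compare your proposal against.

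That said, your outline has the right architecture (Beurling algebra, regularity, zero-divisors giving a hyperinvariant subspace), but it contains a real gap at the step where you claim to obtain an operator $f(T)\in\mathcal{L}(\mathcal{X})$. From the data $(x_n)$ and $(y_n)$ you can form the \emph{vectors} $f\cdot x_0:=\sum_n\hat f(n)\,x_n\in\mathcal{X}$ and $f\cdot y_0:=\sum_n\hat f(n)\,y_n\in\mathcal{X}^*$, and the pairings $\langle f\cdot x_0, y_0\rangle$; but a bilinear form known only on $\mathrm{span}\{x_n\}\times\mathrm{span}\{y_n\}$ does not extend by ``density'' to a bounded bilinear form on $\mathcal{X}\times\mathcal{X}^*$, so no bona fide $f(T)\in\mathcal{L}(\mathcal{X})$ drops out. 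Atzmon's argument works at the level of these vector-valued module actions $f\mapsto f\cdot x_0$ and $f\mapsto f\cdot y_0$: the singularity hypothesis on $G_x$ (resp.\ $G_y$) at $\lambda$ is exactly what prevents $f\cdot x_0$ (resp.\ $f\cdot y_0$) from vanishing for every $f\in\mathcal{A}_\rho$ supported near $\lambda$, and the hyperinvariant subspace is then produced from such a nonzero $f\cdot x_0$ together with a $g\in\mathcal{A}_\rho$ with $fg=0$, using that any $S$ commuting with $T$ satisfies $S(f\cdot x_0)=f\cdot (Sx_0)$ in the appropriate sense. If you rewrite your sketch in those terms---dropping the claim that $f(T)$ is a bounded operator and instead arguing directly with $f\cdot x_0$ and $g\cdot y_0$---the remaining steps you indicate (regularity of $\mathcal{A}_\rho$, the Paley--Wiener/hyperfunction identification of singularities) are indeed the technical core of~\cite{Atzmon-1984}.
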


A few remarks are in order.
Here $T^{-1}x=w$ means $Tw=x$ even if $T$ is not invertible, and in this way
$\big(\|T^n x\|\big)_{n\in\mathbb Z}$
means a sequence
$\big(\|x_n\| \big)_{n\in\mathbb Z}$
with $x_0=x$ and $x_{n+1}=Tx_n$.

Observe also that the fact that $(\|x_n\|)_{n\in \mathbb{Z}}$ and  $(\|y_n\|)_{n\in \mathbb{Z}}$ are dominated by Beurling sequences ensures that the Laurent series defining $G_x$ and $G_y$ converge absolutely in $\mathbb{C}\setminus \mathbb{T}$.
In addition, both $G_x$ and $G_y$ are analytic functions in $\mathbb{C}\setminus \mathbb{T}$ and at $\infty$, and hence, by Liouville's Theorem, each must have at least one singularity on the unit circle. At this regard, in order to apply Atzmon's Theorem to $T_{\alpha}$, as observed by MacDonald (see \cite[Claim pp. 307]{MACDONALD_inv_bishop_type}) one has the following:

\begin{proposition} \label{macdonald claim}
Let $\alpha \in (0,1)$ be an irrational number and $T_{\alpha}$ the Bishop operator acting on $L^p[0,1)$. Let $x_0\in L^p[0,1)$ and consider $G_{x_0}$ and $G_{e^{2\pi \imath t} x_0}$ the analytic functions in $\mathbb{C}\setminus \mathbb{T}$ given by equation (\ref{G_x}) associated to $x_0$ and $e^{2\pi \imath t} x_0$, respectively.
Then $z_0 \in \mathbb{T}$ is a singularity of $G_{x_0}$ if and only if $e^{-2\pi \imath \alpha} z_0$ is a singularity of $G_{e^{2\pi \imath t} x_0}$.
\end{proposition}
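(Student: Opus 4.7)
The plan is to establish a single commutation relation between $T_\alpha$ and multiplication by the character $h(t):=e^{2\pi\imath t}$, use it to intertwine the bi-infinite orbits of $x_0$ and $y_0:=hx_0$, and then substitute into the defining Laurent series for $G$ to read off how the singular set rotates.

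First I would derive the commutation. Since the integer discrepancy between $\{t+\alpha\}$ and $t+\alpha$ is invisible to $h$, we have $h(\{t+\alpha\})=e^{2\pi\imath(t+\alpha)}$, and a one-line calculation gives
\[
T_\alpha(hf)(t)=t\,e^{2\pi\imath(t+\alpha)}f(\{t+\alpha\})=e^{2\pi\imath\alpha}\,h(t)\,T_\alpha f(t),
\]
so that $T_\alpha M_h=e^{2\pi\imath\alpha}M_h T_\alpha$ on $L^p[0,1)$. Consequently, if $(x_n)_{n\in\Z}$ is any bi-infinite orbit of $x_0$ (i.e.\ $T_\alpha x_n=x_{n+1}$), then defining $y_n:=e^{2\pi\imath n\alpha}\,h\cdot x_n$ produces a bi-infinite orbit of $y_0=hx_0$: the identity $T_\alpha y_n=y_{n+1}$ is immediate from the commutation, both for $n\ge 0$ and, absorbing the scalar factor into the preimage, for $n<0$. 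Since $|h|\equiv 1$, moreover, $\|y_n\|_p=\|x_n\|_p$, so the Beurling domination needed for Theorem~\ref{te:atzmon} is preserved.

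Substituting the twist $y_{-n}=e^{-2\pi\imath n\alpha}hx_{-n}$ into the Laurent series defining $G_{y_0}$ on $|z|<1$ and pulling out the common factor $he^{-2\pi\imath\alpha}$ yields
\[
G_{y_0}(z)=h\sum_{n=1}^{\infty} x_{-n}\,e^{-2\pi\imath n\alpha}\,z^{n-1}=e^{-2\pi\imath\alpha}\,h\cdot G_{x_0}\!\bigl(e^{-2\pi\imath\alpha}z\bigr),
\]
and the same identity holds on $|z|>1$ after the analogous change of summation variable. Because $h$ is a modulus-one unit in $L^\infty[0,1)$, multiplication by $h$ is a bounded invertible operator on $L^p[0,1)$ and preserves the property of admitting an analytic continuation across points of $\mathbb{T}$. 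Hence the singular set of $G_{y_0}$ on $\mathbb{T}$ is the rotate of the singular set of $G_{x_0}$ by the factor $e^{-2\pi\imath\alpha}$, which is the content of the proposition.

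The only mildly delicate point is the treatment of negative indices in the first step: as noted in the proof of Theorem~\ref{thm-quasitriangular}, $T_\alpha$ is injective but not surjective, so the very existence of a bi-infinite chain $(x_n)_{n\in\Z}$ is a genuine restriction rather than a free choice. What keeps the argument clean is that the twist transports any such chain on the $x$-side directly to a valid chain on the $y$-side, so no independent construction on the $y$-side is required. Beyond this bookkeeping, the proposition reduces to the commutation identity and the definition of $G$.
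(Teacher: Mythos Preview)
Your proof is correct and follows exactly the route indicated in the paper, which merely records that $T_\alpha$ is similar to $e^{2\pi\imath\alpha}T_\alpha$ via $W=M_{e^{2\pi\imath t}}$; you have made this explicit by deriving the commutation $T_\alpha M_h=e^{2\pi\imath\alpha}M_hT_\alpha$, transporting the bi-infinite orbit, and substituting into the Laurent series. One cosmetic remark: from your identity $G_{y_0}(z)=e^{-2\pi\imath\alpha}h\cdot G_{x_0}(e^{-2\pi\imath\alpha}z)$ the singular set of $G_{y_0}$ is actually $e^{+2\pi\imath\alpha}$ times that of $G_{x_0}$ (not $e^{-2\pi\imath\alpha}$), but the sign is irrelevant for the intended use of the proposition.
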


The proof is just a consequence of the fact that $T_{\alpha}$ is similar to $e^{2\pi \imath \alpha}T_{\alpha}$ via the bilateral shift operator $W=M_{e^{2\pi \imath t}}$ in $L^p[0,1)$ (and unitary equivalent in $L^2[0,1)$).

With Proposition \ref{macdonald claim} at hand, one deduces that $T_{\alpha}$ has non-trivial hyperinvariant subspaces in $L^p[0,1)$, $1\leq p<\infty$, by means of Atzmon's Theorem  as far as there exist $x, y\in L^p[0,1)\setminus\{0\}$ such that
$\big(\|\widetilde{T}_\alpha^n x\|\big)_{n\in\mathbb Z}$ and $\big(\|(\widetilde{T}_\alpha^*)^n y\|\big)_{n\in\mathbb Z}$ are dominated by Beurling sequences. We state it  for later reference.

\begin{theorem}\label{te:atzmon_macdonald}
Given $\alpha \in (0,1)$ be an irrational number, if
there exist $x,y\in L^p[0,1)\setminus\{0\}$, $1\leq p<\infty$, such that
$\big(\|\widetilde{T}_\alpha^n x\|\big)_{n\in\mathbb Z}$ and $\big(\|(\widetilde{T}_\alpha^*)^n y\|\big)_{n\in\mathbb Z}$ are dominated by Beurling sequences then  $T_{\alpha}$ has a non-trivial  hyperinvariant closed subspace.
\end{theorem}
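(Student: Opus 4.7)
The plan is to deduce the statement as a direct corollary of Atzmon's Theorem (Theorem~\ref{te:atzmon}) applied to the normalized operator $\widetilde{T}_\alpha = e\,T_\alpha$. Since $\widetilde T_\alpha$ and $T_\alpha$ differ only by a nonzero scalar, their lattices of closed hyperinvariant subspaces coincide, and it suffices to produce a non-trivial one for $\widetilde T_\alpha$. Given the vectors $x, y$ from the hypothesis, I would set $x_n = \widetilde T_\alpha^n x$ and $y_n = (\widetilde T_\alpha^*)^n y$ for $n \geq 0$, and extend to negative indices by choosing preimages under $\widetilde T_\alpha$ and $\widetilde T_\alpha^*$ respectively; the very statement of the hypothesis, demanding that the norm sequence be defined for all $n\in\mathbb Z$, forces $x$ and $y$ to lie in the ranges of every positive power of these operators, while the obvious injectivity of $\widetilde T_\alpha$ on $L^p[0,1)$ (immediate from $T_\alpha f(t) = t\,f(\{t+\alpha\})$) ensures that each preimage is uniquely determined.

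Atzmon's Theorem then requires two verifications. The first is that $(\|x_n\|)_{n\in\mathbb Z}$ and $(\|y_n\|)_{n\in\mathbb Z}$ be dominated by Beurling sequences, which is precisely the standing hypothesis. The second is the existence of at least one $\lambda\in\mathbb T$ at which $G_x$ and $G_y$ do not both admit analytic continuation. I would argue that this is automatic: since $x_0 = x \neq 0$, the branch of $G_x$ on $\{|z|>1\}$ begins as $-x_0/z + O(z^{-2})$, so $G_x(z)\to 0$ as $|z|\to\infty$. Were $G_x$ to extend holomorphically across every point of $\mathbb T$, the two branches would glue, together with the analyticity at infinity, into a holomorphic $L^p[0,1)$-valued function on $\mathbb C\cup\{\infty\}$, which by the vector-valued Liouville theorem would be constant; combined with $G_x(\infty)=0$, this forces $G_x\equiv 0$ and hence $x_0=0$, a contradiction. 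Consequently $G_x$ must have a singularity at some $\lambda_0\in\mathbb T$, at which the pair $(G_x,G_y)$ trivially fails to both extend. Proposition~\ref{macdonald claim} enters here as an ancillary remark: it shows that such singularities are propagated cleanly under the rotation similarity $T_\alpha\sim e^{2\pi i\alpha}T_\alpha$, consistent with $\sigma(\widetilde T_\alpha)=\overline{\mathbb D}$, and accounts for the freedom to replace $x$ by $e^{2\pi i t}x$ when calibrating the location of $\lambda_0$.

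Since $\widetilde T_\alpha$ is manifestly not a scalar multiple of the identity, Atzmon's Theorem then delivers a non-trivial closed hyperinvariant subspace for $\widetilde T_\alpha$, and therefore for $T_\alpha$. The reduction itself is soft; the genuinely difficult part of the programme, reserved for the following sections, is to \emph{produce} concrete $x, y \in L^p[0,1)\setminus\{0\}$ whose iterated norms admit Beurling majorants for as wide a class of irrationals $\alpha$ as possible. That is where the arithmetic and Diophantine estimates on the denominators $q_j$ of the convergents of $\alpha$ will come into play.
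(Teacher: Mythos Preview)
Your argument is correct and follows exactly the paper's route: the theorem is stated there without a formal proof, merely as the combination of Atzmon's Theorem~\ref{te:atzmon} with MacDonald's observation (Proposition~\ref{macdonald claim}), and you have spelled out precisely that reduction, including the vector-valued Liouville argument the paper itself invokes in the remarks following Theorem~\ref{te:atzmon}. The one difference in emphasis is that the paper treats Proposition~\ref{macdonald claim} as the essential step rather than an ancillary remark; this is because the singularity hypothesis in Atzmon's theorem (the phrase ``at least $\lambda$'' in the paper's transcription should be read as ``at least two $\lambda$'') requires two distinct singular points on $\mathbb T$, and while Liouville guarantees one singularity each for $G_x$ and $G_y$, MacDonald's rotation is what lets one replace $x$ by $e^{2\pi i t}x$ to separate them should they happen to coincide.
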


We are now in position to state the main result of this section:

\begin{theorem}\label{te:main_result}
Let $\alpha \in (0,1)$ an irrational number
and $(a_j/q_j)_{j=0}^\infty$ the convergents in its continuous fraction.
If
\begin{equation}\label{eq:denominators_growth}
 \log q_{j+1} =O\left(
 \frac{q_j}{(\log q_j)^3}
 \right)
\end{equation}
then $T_{\alpha}$ has a non-trivial  closed hyperinvariant subspace in $L^p[0,1)$, for $1\leq p\leq \infty$.
\end{theorem}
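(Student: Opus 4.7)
By Theorem~\ref{te:atzmon_macdonald} it suffices to construct non-zero $x,y\in L^p[0,1)$ for which $(\|\widetilde T_\alpha^n x\|)_{n\in\Z}$ and $(\|(\widetilde T_\alpha^*)^n y\|)_{n\in\Z}$ are dominated by Beurling sequences. Iterating the definition gives
\[
\widetilde T_\alpha^n x(t)=e^n\Big(\prod_{k=0}^{n-1}\{t+k\alpha\}\Big)\,x(\{t+n\alpha\}),
\]
and since $T_\alpha^{-1}$ acts as $f(s)\mapsto f(\{s-\alpha\})/\{s-\alpha\}$,
\[
\widetilde T_\alpha^{-n} x(t)=e^{-n}\,\frac{x(\{t-n\alpha\})}{\prod_{k=1}^{n}\{t-k\alpha\}}.
\]
Since $\int_0^1\log s\,ds=-1$, the natural mean of the Birkhoff sum $S_n(t)=\sum_{k=0}^{n-1}\log\{t+k\alpha\}$ is $-n$, so $\log\|\widetilde T_\alpha^{\pm n}x\|_p$ is controlled, up to constants, by the oscillation $|S_n(t)+n|$ uniformly over an appropriate subset of $t$. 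The adjoint $T_\alpha^*$ has the analogous form $T_\alpha^* g(t)=\{t-\alpha\}\,g(\{t-\alpha\})$, so the same analysis applies verbatim to $y$.

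I would choose $x=\mathbf 1_E$ and $y=\mathbf 1_{E'}$ for subsets $E,E'\subset(0,1)$ bounded away from $0$, after further excising a small set of starting points whose orbits approach $0$ anomalously quickly (so that the quotient defining $\widetilde T_\alpha^{-n}x$ remains in $L^p$ after truncation). With this choice the task reduces to establishing, for $q_j\le n<q_{j+1}$, a uniform Diophantine estimate
\[
\bigl|S_n(t)+n\bigr|\ll(\log q_{j+1})(\log n)^{C},\qquad t\in E_n,
\]
where $E_n\subset E$ has positive measure. Equidistribution alone is insufficient because $\log\{\cdot\}$ is singular at $0$; the key point is that the exceptional indices $k$ for which $\{t+k\alpha\}$ lies within $1/q_{j+1}$ of $0$ are, by the three-distance theorem, few in number and geometrically spaced, and their aggregate contribution is at most $O(\log q_{j+1})$ per block of length $q_j$. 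The remaining terms are controlled by a Denjoy--Koksma-type inequality applied over the convergents of $\alpha$. This arithmetic step is the crux of the argument and the main obstacle, where the paper's refinement of the bounds of Davie and Flattot must come in.

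Once this oscillation bound is in place, define
\[
\rho_n=\exp\bigl(c(\log q_{j+1})(\log n)^{C}\bigr)\quad\text{for } q_j\le|n|<q_{j+1},
\]
replace $\rho_n$ by its sub-multiplicative hull so that $\rho_{m+n}\le\rho_m\rho_n$, and verify the summability
\[
\sum_{n\ge 1}\frac{\log\rho_n}{1+n^2}<\infty.
\]
Grouping $n\in[q_j,q_{j+1})$ reduces this, up to log factors, to the convergence of $\sum_j \log q_{j+1}/q_j$. The hypothesis $\log q_{j+1}=O\bigl(q_j/(\log q_j)^3\bigr)$ bounds this by $\sum_j(\log q_j)^{-3}$, which converges because $q_{j+1}\ge q_j+q_{j-1}$ forces $q_j$ to grow at least geometrically, hence $\log q_j\gtrsim j$. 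Theorem~\ref{te:atzmon_macdonald} then produces a non-trivial closed hyperinvariant subspace for $1\le p<\infty$; the case $p=\infty$ is automatic by non-separability of $L^{\infty}[0,1)$.
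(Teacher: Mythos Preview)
Your overall strategy via Theorem~\ref{te:atzmon_macdonald} and the choice $x=y=1_E$ matches the paper's, but the claimed oscillation bound
\[
\bigl|S_n(t)+n\bigr|\ll(\log q_{j+1})(\log n)^{C},\qquad q_j\le n<q_{j+1},
\]
is false, and this is the heart of the argument. Over each block of $q_j$ consecutive iterates the orbit $\{t+k\alpha\}$ is, up to $O(1/q_{j+1})$, a translate of the arithmetic progression $\{0,1/q_j,\dots,(q_j-1)/q_j\}$; hence $L_{q_j}(t)\approx L(\{q_j t\})$ with $L(x)=\sum_{\ell=0}^{q_j-1}\bigl(1+\log\frac{x+\ell}{q_j}\bigr)$. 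Stirling gives $|L(x)|\asymp\log q_j$ for most $x$, and $L'\ge\log q_j$ confirms this. Since consecutive blocks differ only by a shift of size $O(1/q_{j+1})$, there is essentially no cancellation between blocks, and one gets $|L_n(t)|\asymp(n/q_j)\log q_j$ for generic $t$. This is nearly linear in $n$, not polylogarithmic; in fact Lemma~\ref{le:sublevel_estimate} in \S\ref{Section 4} proves exactly this lower bound. Consequently your proposed Beurling sequence $\rho_n=\exp\bigl(c(\log q_{j+1})(\log n)^C\bigr)$ cannot dominate $\|\widetilde T_\alpha^n x\|$ for any nonzero $x$.

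The paper's route is different in two essential respects. First, the bound it proves (Corollary~\ref{co:norm_bound}) is
\[
\log\|\widetilde T_\alpha^n x\|\ll q+\frac{|n|}{q}\log(q+1)+\frac{|n|+Q}{Q}\log(|n|+2),
\]
with $\log\rho_n$ still of nearly linear size; getting even this requires the refined analysis of Proposition~\ref{pr:bound_n_general}, which decomposes $L_n$ around the nearest multiple of $Q$ and handles the short and long ranges separately (Lemmas~\ref{le:bound_n_small} and~\ref{le:bound_n_large}). Second, and this is the point you missed, the convergent $q$ used in the bound is \emph{not} the one with $q_j\le n<q_{j+1}$: one chooses instead $q\le|n|^{2/3}<Q$. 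The hypothesis $\log Q\ll q/(\log q)^3$ then gives $q/\log q\gg\log Q\,(\log q)^2\gg\log|n|\,(\log\log|n|)^2$, whence $\frac{|n|}{q}\log q\ll|n|/\bigl(\log|n|(\log\log|n|)^2\bigr)$, and the resulting $\rho_n$ is a Beurling sequence. The saving of a single $\log n$ factor is exactly what the cube in $(\log q_j)^3$ buys, and is why the exponent $3$ (rather than Flattot's effective $1/2$ in \eqref{eq:flattot}) appears.
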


Observe that Theorem \ref{te:main_result} relaxes the condition provided by Flattot \eqref{eq:flattot},  allowing the exponent $1$ instead of $1/2$ and quantifying the role of $\varepsilon$. As we shall establish in Section \ref{Section 4}, Theorem~\ref{te:main_result} is essentially the best possible result attainable from Theorem~\ref{te:atzmon} and any improvement beyond the power of $\log q_j$ seems to require different functional analytical results.

Before proving Theorem \ref{te:main_result}, we consider a short derivation of the results of Davie and Flattot from Theorem~\ref{te:atzmon_macdonald} which highlights
arithmetical considerations encapsulated in the Banach algebra arguments and  may give some insight into the problem. In particular, it constitutes a  simplification of the Theorem in \cite{flattot}.

We will see that the aforementioned results from \cite{DAVIE_inv_subs_bishop} and \cite{flattot} follow choosing in Theorem~\ref{te:atzmon_macdonald}
\[
 x=y=1_{\mathcal{B}_{\alpha}}
 \quad\text{with}\quad
 \mathcal{B}_{\alpha}
 =
 \Big\{
 \frac{1}{20}
 <t<
 \frac{19}{20}
 \,:\,
 \langle t-n\alpha\rangle >\frac{1}{20n^2}
 ,\
 \forall n\in\Z^*
 \Big\}
\]
where
$\langle x\rangle =\min\big(\{x\}, 1-\{x\}\big)$ is the distance to the closest integer
and $\Z^* =\Z\setminus\{0\}$.
As a matter of fact $\mathcal{B}_\alpha$ is none other than a variant of the sets $E_t$ appearing in those papers.
We point out that replacing in the definition of $\mathcal{B}_\alpha$ the condition by $\langle q_j t\rangle>Cq_j^{-1}$, with $C$ {a} certain constant, would give a more manageable set but we prefer not to proceed in this way to keep the analogy with \cite{DAVIE_inv_subs_bishop} and \cite{flattot}.

Observe also that $\mathcal{B}_{\alpha}$ has positive measure and hence $x$ and $y$ do not vanish identically as elements of $L^p[0,1)$. Note that $\langle t\rangle \le \delta$ defines in $[0,1)$ a set of measure  $2\delta$ for $\delta<1/2$. Then the measure of the complement of $\mathcal{B}_\alpha$ in $[0,1)$ is at most one twentieth of
$2+2\sum_{n\in\Z^*} n^{-2}=2+2\pi^2/3<20$  and consequently $\mathcal{B}_\alpha$ has positive measure.

In what follows, if $\alpha$ is an irrational number, and $(q_j)_{j=0}^\infty$ denotes the denominators of its convergents, an important fact we are going to use about continued fractions is that   $(q_j)_{j=0}^\infty$ is an increasing sequence of positive integers such that \cite[\S7.5]{MiTa}
\begin{equation}\label{eq:cont_frac}
 (2Q)^{-1}<
 \langle
 q\alpha
 \rangle
 <Q^{-1}
 \qquad\text{for}\quad
 q = q_j,\quad Q=q_{j+1}.
\end{equation}
This is more precise than Dirichlet's theorem, which assures $\langle q \alpha\rangle <q^{-1}$ for infinitely many values of~$q$.
From here on out, we use $q$ and $Q$ to indicate consecutive terms of $(q_j)_{j=0}^\infty$ as in \eqref{eq:cont_frac}.

\subsection{The results of Davie and Flattot}\label{sec:davie_bound} In this subsection, we derive the results of Davie and Flattot, providing a simplification of the Theorem in \cite{flattot}.

In the sequel, the real function
\begin{equation}\label{eq:Ln}
 L_n(t)
 =
 \sum_{j=0}^{n-1}\big(1+\log \{t+j\alpha\}\big)
\end{equation}
plays a fundamental role because it is plain to check
\begin{equation}\label{eq:TL}
 \begin{cases}
 \begin{array}{ll}
 \widetilde{T}_{\alpha}^n f(t)=e^{L_n(t)} f(\{t+n\alpha\}),
 &
 \widetilde{T}_{\alpha}^{-n} f(\{t+n\alpha\})=e^{-L_n(t)} f(t),
 \\[7pt]
 (\widetilde{T}_{\alpha}^*)^{n} f(\{t+n\alpha\})=e^{L_n(t)} f(t),
 &
 (\widetilde{T}_{\alpha}^*)^{-n} f(t)=e^{-L_n(t)} f(\{t+n\alpha\})
 \end{array}
 \end{cases}
\end{equation}
for $n\in\Z^+$ and also for $n=0$ defining $L_0=0$.

\medskip

For latter reference it is convenient to manipulate a little the definition of $L_n(t)$ when $n=q$.
\begin{lemma}\label{le:Lq}
 For  $q$ as in \eqref{eq:cont_frac} fixed there exist $1/2<|\delta|<1$ and $|\delta_\ell|<1$ with the same sign such that for any
 $k\in\Z$
 \[
  L_q(t+kq\alpha)
  =
  \sum_{\ell=0}^{q-1}
  \bigg(1+\log \big\{t+\frac{\ell}{q}+\frac{k\delta+\delta_\ell}{Q}\big\}\bigg).
 \]
\end{lemma}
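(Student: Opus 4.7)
The plan is to use the approximation of $\alpha$ by its convergent $a/q$ combined with the bijection of residues modulo $q$ induced by multiplication by $a$. From the bound \eqref{eq:cont_frac} we have $\langle q\alpha\rangle \in \big(1/(2Q),\, 1/Q\big)$, so writing $q\alpha = a + \delta/Q$ with $a\in\Z$ gives $\delta = Q(q\alpha - a)$ satisfying $1/2 < |\delta| < 1$, with $\delta$ carrying the sign of $q\alpha - a$. Equivalently, $\alpha = a/q + \delta/(qQ)$, which will be the key identity to unfold $j\alpha$.

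Next, since $a/q$ is a convergent, $\gcd(a,q)=1$, and consequently the map $\sigma\colon j\mapsto ja\pmod q$ is a permutation of $\{0,1,\dots,q-1\}$. I would plug $\alpha = a/q + \delta/(qQ)$ into $j\alpha$ to obtain
\[
 j\alpha
 =
 \frac{ja}{q}+\frac{j\delta}{qQ}
 =
 m_j + \frac{\sigma(j)}{q} + \frac{j\delta}{qQ},
\]
for the integer $m_j = \lfloor ja/q\rfloor$. Doing the same with $kq\alpha = ka + k\delta/Q$, one gets
\[
 \{t+kq\alpha+j\alpha\}
 =
 \Big\{t+\frac{\sigma(j)}{q} + \frac{k\delta}{Q} + \frac{j\delta}{qQ}\Big\}.
\]

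Now I would reindex the sum defining $L_q(t+kq\alpha) = \sum_{j=0}^{q-1}\big(1+\log\{t+kq\alpha+j\alpha\}\big)$ via the bijection $\sigma$, setting $\ell = \sigma(j)$ (so $j = \sigma^{-1}(\ell)$) and defining
\[
 \delta_\ell := \frac{\sigma^{-1}(\ell)\,\delta}{q}.
\]
Then $j\delta/(qQ) = \delta_\ell/Q$, so each term takes the required form $1+\log\{t+\ell/q + (k\delta+\delta_\ell)/Q\}$. Since $0\le \sigma^{-1}(\ell) \le q-1$, one has $|\delta_\ell| \le (q-1)|\delta|/q < |\delta| < 1$, and because $\sigma^{-1}(\ell)/q \ge 0$ the $\delta_\ell$'s share the sign of $\delta$, as required.

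There is no real obstacle here; this is essentially a bookkeeping lemma. The only points requiring a moment of care are (i) justifying that $\sigma$ is a bijection, which is immediate from coprimality of $a$ and $q$, and (ii) confirming that the fractional part operation absorbs the integer shifts $m_j$, $ka$ uniformly in $k$ (so the identity holds for every $k\in\Z$, not just $k\ge 0$). The strict inequalities $|\delta|>1/2$ and $|\delta_\ell|<1$ (not $\le 1$) follow from the fact that $\alpha$ is irrational, so $q\alpha-a\ne \pm 1/Q$ and $\sigma^{-1}(\ell)<q$.
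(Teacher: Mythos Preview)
Your proof is correct and follows essentially the same route as the paper: write $\alpha=a/q+\delta/(qQ)$ via \eqref{eq:cont_frac}, expand $\{t+kq\alpha+j\alpha\}$, reindex by the bijection $j\mapsto aj\pmod q$, and set $\delta_\ell=j_\ell\,\delta/q$ where $j_\ell$ is the inverse image of $\ell$. The paper's argument is the same, just more tersely stated.
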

\begin{proof}
 By \eqref{eq:cont_frac}, we can write $\alpha=a/q+\delta/(qQ)$ where $a/q$ is a convergent of $\alpha$. Then the fractional part in \eqref{eq:Ln} is
 $\big\{t+{ja}/{q}+{k\delta}/{Q}+{j\delta}/({qQ})\big\}$.
 The map $j\mapsto aj$ is invertible modulo~$q$. If $\ell \mapsto j_\ell$ is its inverse with $0\le j_\ell<q$, the result follows taking $\delta_\ell = j_\ell \delta/q$.
\end{proof}

The following estimates for $L_n$
are variations on those for $F_m$ in \cite{DAVIE_inv_subs_bishop}.
\begin{lemma}\label{le:davie_str}
 There exists an absolute  constant $C>0$ such that for $n\in\Z^+$
 \[
  L_n(t)
  \le
  C
  \Big(
  r+\frac{n}{q}\log(q+1)
  \Big)
  \qquad
  \text{for every}\quad t\in\R
 \]
 where $r$ is the remainder when $n$ is divided by $q$. Moreover we have
 \[
  L_n(t)
  \ge
  -C
  \Big(
  r'+\frac{n+q}{q}\log(\mu^{-1}+q)
  \Big)
  \qquad
   \text{if}\quad
  \min_{0\le j<r'+n}
  \{t+j\alpha\}
  \ge
  \mu>0
 \]
 where $r'=0$ if $r= 0$ and $r'=q-r$ otherwise.
\end{lemma}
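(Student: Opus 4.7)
The plan is to split $L_n(t)$ into blocks of length~$q$ and apply Lemma~\ref{le:Lq} to each block. Writing $n=mq+r$ with $0\le r<q$, we have
\[
L_n(t)=\sum_{k=0}^{m-1}L_q(t+kq\alpha)+\sum_{j=mq}^{n-1}\bigl(1+\log\{t+j\alpha\}\bigr),
\]
where the residual sum has $r$ terms, each at most~$1$. Both halves of the lemma then reduce to uniform estimates on a single block $L_q(t+kq\alpha)=q+\sum_{\ell=0}^{q-1}\log y_{k,\ell}$ with $y_{k,\ell}=\{t+\ell/q+(k\delta+\delta_\ell)/Q\}$. For the lower bound one additionally notes that $n+r'$ is a multiple of~$q$, so that $L_{n+r'}(t)=\sum_{k=0}^{m'-1}L_q(t+kq\alpha)$ with $m'=(n+r')/q\le(n+q)/q$, and since each residual summand is at most~$1$, $L_n(t)\ge L_{n+r'}(t)-r'$.

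Within a block, the common shift $k\delta/Q$ is the same for all~$\ell$, so the sort order of the $y_{k,\ell}$ depends only on the relative perturbations $\delta_\ell/Q$, which (using that all $\delta_\ell$ share the sign of~$\delta$) satisfy $|\delta_\ell-\delta_{\ell'}|/Q\le 1/Q\le 1/q$. Since the unperturbed points $\{t+k\delta/Q+\ell/q\}$ are exactly $1/q$-equispaced on $[0,1)$, a counting argument on $\R/\Z$ yields an absolute constant~$C$ such that, after sorting, $(i-C)/q\le y_{(i)}\le(i+C)/q$ for $C\le i\le q-1$. Inserting the upper estimate into $\sum\log y_{(i)}$ and invoking Stirling's formula gives $\sum_\ell\log y_{k,\ell}\le -q+O(\log q)$, so that $L_q(t+kq\alpha)\le O(\log(q+1))$ uniformly in $k$ and~$t$; summing over the $m$ blocks together with the residual produces the claimed upper bound. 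For the lower bound, the hypothesis forces $y_{(i)}\ge\mu$ for every~$i$, so $\log y_{(i)}\ge\log\mu$ for $i\le C$ while $\log y_{(i)}\ge\log((i-C)/q)$ for $i>C$; Stirling now yields
\[
\sum_\ell\log y_{k,\ell}\ge -q-O(\log\mu^{-1}+\log q)=-q-O(\log(\mu^{-1}+q)),
\]
hence $L_q(t+kq\alpha)\ge-C'\log(\mu^{-1}+q)$. Summing over the $m'\le(n+q)/q$ blocks and applying $L_n(t)\ge L_{n+r'}(t)-r'$ completes the argument.

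The delicate point I anticipate is the circle-counting argument, particularly when $Q$ is only slightly larger than~$q$ so that the relative perturbation $1/Q$ is comparable to the spacing $1/q$: one must verify that the ``discrepancy constant'' $C$ can be chosen independent of the ratio $q/Q$. This should follow from the observation that each $y_{k,\ell}$ lies within $1/Q\le 1/q$ of an exactly equispaced point, so the number of perturbed points below any threshold matches the unperturbed count up to an absolute additive error. Once this is in place, the remainder of the proof is a routine Stirling-type asymptotic.
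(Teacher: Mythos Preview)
Your proof is correct and follows essentially the same route as the paper: decompose $L_n$ into $\lfloor n/q\rfloor$ (resp.\ $\lceil n/q\rceil$) blocks of length~$q$ via Lemma~\ref{le:Lq}, bound the residual of $r$ (resp.\ $r'$) terms trivially, and estimate each block by noting that the fractional parts lie in disjoint intervals of length~$1/q$ so that Stirling gives $|L_q|\ll\log(q+1)$ (plus the $\log\mu^{-1}$ correction for the lower bound). Your worry about the absolute constant when $Q$ is only slightly larger than~$q$ is unnecessary: since $|\delta_\ell|/Q<1/Q<1/q$ and all $\delta_\ell$ share a sign, the perturbed points are always confined to disjoint length-$1/q$ intervals, which is precisely the paper's formulation and immediately gives $C\le 2$ independently of $q/Q$.
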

\begin{proof}
 Separating the last $r$ terms in $L_n$, we have
 \[
  L_n(t)
  \le
  r+
  \sum_{j=0}^{n-r-1}
  \big(1+\log \{t+j\alpha\}\big)
  =
  r+
  \sum_{k=0}^{\lfloor n/q\rfloor -1}
  L_q(t+kq\alpha).
 \]
 Applying Lemma~\ref{le:Lq}, as $\delta_\ell$ has constant sign and $Q>q$,
 {on each interval $[\ell/q, (\ell+1)/q]$ there is exactly one value $\ell/q+{\delta_\ell}/{Q}$ and then we have}
 \[
  L_q(t+kq\alpha)
  \le
  q+
  \sum_{\ell=2}^{q-1}
  \log\frac{\ell}{q}
  \le C\log(q+1),
 \]
 using Stirling's approximation, which proves the first inequality.

 For the second, we expand the sum to the first multiple of $q$ not less than~$n$. Then
 \[
  L_n(t)
  =
  -
  \sum_{j=n}^{n+r'-1}
  +
  \sum_{j=0}^{n+r'-1}
  \ge
  -r'
  +
  \sum_{k=0}^{\lceil n/q\rceil -1}
  L_q(t+kq\alpha).
 \]
 As the values of $\ell/q+{\delta_\ell}/{Q}$ are confined to disjoint intervals of length $q^{-1}$,  at most two of the fractional parts in $L_q$ could nearly coincide and the smallest fractional part appearing in $L_q(t+kq\alpha)$
 is the minimum indicated in our hypothesis. Then
 \[
  L_q(t+kq\alpha)
  \ge
  -2\log\big(\mu^{-1}\big)
  +q
  +
  \sum_{\ell=1}^{q-2}
  \log\frac{\ell}{q}
  \ge -C\log(\mu^{-1}+q)
 \]
 and the result follows.
\end{proof}

\begin{corollary}\label{co:davie_bound}
Let $x=y=1_{\mathcal{B}_{\alpha}}$. Then for $n\in\Z$
\[
 \log\big(
 1+
 \|\widetilde{T}_{\alpha}^n x\|
 +
 \|(\widetilde{T}_{\alpha}^*)^{n}y\|
 \big)
 \ll
 q+\frac{|n|+q}{q}\log(|n|+q+1).
\]
\end{corollary}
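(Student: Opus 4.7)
The plan is to use the identities \eqref{eq:TL}, combined with the measure-preserving change of variable $u=\tau_\alpha^m t=\{t+m\alpha\}$, to rewrite each of the four $L^p$-norms $\|\widetilde T_\alpha^{\pm m} x\|_p$, $\|(\widetilde T_\alpha^*)^{\pm m} y\|_p$ (with $m=|n|$) as an integral of $e^{\pm p L_m(t)}$ either over $\mathcal B_\alpha$ or over the preimage $\tau_\alpha^{-m}\mathcal B_\alpha$, and then invoke the two estimates of Lemma~\ref{le:davie_str}. A routine computation with \eqref{eq:TL} yields $\|\widetilde T_\alpha^{m}x\|_p^p=\int_{\tau_\alpha^{-m}\mathcal B_\alpha}e^{pL_m}$ and $\|(\widetilde T_\alpha^*)^{m}y\|_p^p=\int_{\mathcal B_\alpha}e^{pL_m}$ in the positive case, for which the universal upper bound $L_m(t)\le C(r+(m/q)\log(q+1))$ with $r<q$ is already enough to produce the claimed estimate.

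For the negative exponents one obtains $\|\widetilde T_\alpha^{-m}x\|_p^p=\int_{\mathcal B_\alpha} e^{-pL_m}$ and $\|(\widetilde T_\alpha^*)^{-m}y\|_p^p=\int_{\tau_\alpha^{-m}\mathcal B_\alpha} e^{-pL_m}$, so one must bound $-L_m$ from above on those domains, which is the content of the conditional lower bound of Lemma~\ref{le:davie_str}. This is precisely what $\mathcal B_\alpha$ is designed to deliver. If $t\in\mathcal B_\alpha$ then $\{t\}>1/20$ and, for every $j\ne 0$, the separation $\langle t-k\alpha\rangle>(20k^2)^{-1}$ with $k=-j$ gives $\{t+j\alpha\}\ge\langle t+j\alpha\rangle>(20j^2)^{-1}$; restricting to $0\le j<r'+m$ yields $\mu=(20(r'+m)^2)^{-1}$, which handles the third integral. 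For the fourth, set $s=\tau_\alpha^m t\in\mathcal B_\alpha$ and reindex by $k=j-m\in[-m,r'-1]$, so that $\{t+j\alpha\}=\{s+k\alpha\}$; since $r'<q$ one has $|k|\le m+q$, and the same separation inequality applied to $s$ produces $\mu=(20(m+q)^2)^{-1}$.

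In either situation $\log(\mu^{-1}+q)\ll\log(|n|+q+1)$ and $r,r'<q$, so the lower bound in Lemma~\ref{le:davie_str} gives the stated inequality; the case $p=\infty$ is identical after replacing integrals by essential suprema. The main (mild) obstacle is the reindexing in the $(\widetilde T_\alpha^*)^{-m}$ case: after the change $k=j-m$ one must verify that $|k|\ll m+q$ regardless of whether $m\ge q$ or $m<q$, which is a consequence of $r'<q$ in both sub-cases. Combining the four estimates and taking logarithms produces the bound in the statement.
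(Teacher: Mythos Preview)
Your proposal is correct and follows essentially the same approach as the paper: both use \eqref{eq:TL} to reduce to pointwise bounds on $\pm L_m(t)$, apply the first inequality of Lemma~\ref{le:davie_str} for the positive powers, and for the negative powers verify the hypothesis of the second inequality via the defining separation condition of $\mathcal B_\alpha$, with the adjoint case handled by the reindexing $j\mapsto j-m$ (equivalently the paper's substitution $\ell=m-j$) to bring the base point back into $\mathcal B_\alpha$. Your choices $\mu^{-1}=20(r'+m)^2$ and $\mu^{-1}=20(m+q)^2$ are the same as the paper's $\mu^{-1}=20n^2$ and $\mu^{-1}=20(n+q)^2$ up to harmless constants absorbed by $\log(|n|+q+1)$.
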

\begin{proof}
 The result is trivial for $n=0$ and it follows immediately from the first part of Lemma~\ref{le:davie_str} via \eqref{eq:TL}  if $n>0$. On the other hand, the second part gives the expected bound for
 $\log\big(1+\|\widetilde{T}_{\alpha}^{-n} x\|)$ with $n\in\Z^+$
 because for $t\in\mathcal{B}_\alpha$, we can take  $\mu^{-1}=20n^2$.
 The same works for
 $\log\big(1+\|(\widetilde{T}_{\alpha}^*)^{-n} y\|)$ with $n\in\Z^+$
 because $\{t+n\alpha\}\in\mathcal{B}_\alpha$ implies
 $\langle t+n\alpha-\ell \alpha\rangle >1/(20\ell)^2$ for $-q< \ell\le n$, $\ell\ne 0$ and then $\mu^{-1}=20(n+q)^2$ is a valid choice.
\end{proof}

With this bound we can easily derive the best known result from Theorem~\ref{te:atzmon_macdonald}.

\begin{corollary}\label{co:flattot}
Let $\alpha \in (0,1)$ be an irrational number such that the convergents
$(a_j/q_j)_{j=0}^\infty$ in its continuous fraction satisfy \eqref{eq:flattot}. Then $T_\alpha$ has a non-trivial  closed hyperinvariant subspace.
\end{corollary}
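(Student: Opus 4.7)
The plan is to apply Theorem \ref{te:atzmon_macdonald} with $x=y=1_{\mathcal{B}_\alpha}$; as observed in the text, this is a nonzero element of $L^p[0,1)$ because $\mathcal{B}_\alpha$ has positive measure. Corollary \ref{co:davie_bound} immediately supplies, for every $n\in\Z$ and every convergent denominator $q=q_j$ of $\alpha$,
\[
 u_n:=\log\bigl(1+\|\widetilde{T}_\alpha^n x\|+\|(\widetilde{T}_\alpha^*)^n y\|\bigr)
 \ll
 q+\frac{|n|+q}{q}\log(|n|+q+1).
\]
Since $q$ is at our disposal, the task reduces to selecting, for each $n$, a suitable $q_j$ so that the right-hand side is dominated by $\log\rho_n$ for a Beurling sequence $\rho$.

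My candidate is $\rho_n=\exp\!\bigl(|n|/(\log(|n|+2))^{1+\delta}\bigr)$ with $\delta>0$ small. The integrability condition $\sum_n\log\rho_n/(1+n^2)<\infty$ is clear, and since $t\mapsto t/(\log(t+2))^{1+\delta}$ is concave for $t$ large, $\rho$ is (after a harmless bounded modification) submultiplicative, hence a Beurling sequence. It will suffice to show that for every sufficiently large $|n|$ there exists a convergent denominator $q_j$ with
\[
 (\log|n|)^{2+2\delta}\le q_j\le |n|^{1/2}.
\]
With such a $q=q_j$, the Corollary \ref{co:davie_bound} bound becomes $u_n\ll |n|^{1/2}+(|n|/q)\log|n|\ll |n|/(\log|n|)^{1+2\delta}$, which is comfortably dominated by $\log\rho_n$.

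The main obstacle is this arithmetic claim, which I will prove by contradiction. If no $q_j$ lies in the stated interval, some pair of consecutive denominators straddles it: $q_j<(\log|n|)^{2+2\delta}$ and $q_{j+1}>|n|^{1/2}$, so that $\log q_{j+1}>\tfrac12\log|n|$. On the other hand, applying \eqref{eq:flattot} with a parameter $\varepsilon\in(0,1/2)$ yields
\[
 \log q_{j+1}\ll q_j^{1/2-\varepsilon}<(\log|n|)^{(2+2\delta)(1/2-\varepsilon)}.
\]
Choosing $\varepsilon$ slightly larger than $\delta/(2+2\delta)$ makes the exponent $(2+2\delta)(1/2-\varepsilon)$ strictly less than $1$, contradicting the lower bound $\log q_{j+1}>\tfrac12\log|n|$ for $|n|$ sufficiently large. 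Exactly this flexibility—that \eqref{eq:flattot} holds for \emph{every} $\varepsilon>0$—is what drives the argument. Once the window is non-empty for all large $|n|$, Theorem~\ref{te:atzmon_macdonald} delivers the desired non-trivial closed hyperinvariant subspace.
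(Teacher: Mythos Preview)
Your proof is correct and follows essentially the same approach as the paper: both use Corollary~\ref{co:davie_bound} with $x=y=1_{\mathcal{B}_\alpha}$, the Beurling sequence $\log\rho_n\asymp |n|/(\log|n|)^{1+\delta}$, and the hypothesis \eqref{eq:flattot} to locate a convergent denominator $q$ that is simultaneously $\gg(\log|n|)^{1+\sigma}$ and at most a fixed power of $|n|$. The only cosmetic difference is that the paper selects $q$ directly as the largest $q_j\le |n|^{2/3}$ and reads off the lower bound $q\gg(\log|n|)^{(1/2-\varepsilon)^{-1}}$ from $Q>|n|^{2/3}$, whereas you argue by contradiction that some $q_j$ falls in the window $[(\log|n|)^{2+2\delta},|n|^{1/2}]$.
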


\begin{proof}
 The sequence $(\rho_n)_{n\in\Z}$
 given by $\log \rho_n = C_\sigma
  {|n|}{\log^{-\sigma}(2+|n|)}$
 is clearly a Beurling sequence for any $\sigma>1$ and $C_\sigma>0$.
 By Corollary~\ref{co:davie_bound} and Theorem~\ref{te:atzmon_macdonald}, it is enough to show that for $|n|$ large we can always find $q$ such that
 \[
  q+\frac{|n|+q}{q}\log(|n|+q+1)
  =
  O
  \left(
  \frac{|n|}{\log^\sigma(2+|n|)}
  \right)
  \qquad\text{for some }\sigma>1.
 \]
 Take $q$ such that $q\le |n|^{2/3}<Q$.
 By \eqref{eq:flattot} we have
 $\log Q=O\big(q^{1/2-\varepsilon}\big)$, hence
 $q\gg \big(\log |n|\big)^{1+\sigma}$
 for $1+\sigma=(1/2-\varepsilon)^{-1}$ and the expected bound follows.

\end{proof}

\subsection{Proof of Theorem \ref{te:main_result}}\label{sec:main}

In this subsection, we address the proof of Theorem \ref{te:main_result}.\medskip

Firstly, for $t\in \mathcal{B}_\alpha$ we can take $\mu^{-1}=20(n+q)^2$ in Lemma~\ref{le:davie_str} and
if $n$ is very large in comparison with $q$
there is an asymmetry in the bounds obtained in this lemma
being the upper bound stronger.
This is reasonable since in \eqref{eq:Ln} a fractional part can be very small but not large since it is bounded by 1. Anyway, we shall see that it is possible to partially recover the symmetry getting a non biased bound for $|L_n(t)|$
by a more careful analysis than the one in \textrm{\S}\ref{sec:davie_bound}. The {improvement} is achieved when $n$ is very large in comparison to $q$, in such a way that $\log(|n|+q+1)$ is not comparable to $\log(q+1)$ in Corollary~\ref{co:davie_bound}, but it is controlled by $Q$ (see Proposition~\ref{pr:bound_n_general} below).

\begin{lemma}\label{le:bound_n_small}
If $q\mid n$, $1\le n\le Q/(100q)$ and $t_0\in \mathcal{B}_\alpha$ then
\[
 |L_n(t)|\ll \frac{n}{q}\log (q+1)
 \qquad\text{for every $t\in [0,1)$ with }|t-t_0| \le \frac{1}{(10q)^2}.
\]
\end{lemma}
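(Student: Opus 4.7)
The plan is to exploit the divisibility $q \mid n$ by writing $n = kq$ and using the cocycle identity
\[
 L_n(t) = \sum_{k'=0}^{k-1} L_q(t + k'q\alpha),
\]
which is immediate from the definition \eqref{eq:Ln}. Hence it suffices to bound each summand $|L_q(t + k'q\alpha)|$ uniformly in $k' \in \{0,1,\ldots,k-1\}$ by $O(\log(q+1))$; summing then yields the desired inequality. Note also that since $n \ge 1$ and $q\mid n$ force $n \ge q$, the hypothesis $n \le Q/(100q)$ already gives $Q \ge 100q^2$ and $k/Q \le 1/(100q^2)$.

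First I would show that the shifted point $t_{k'} := \{t + k'q\alpha\}$ stays very close to $t_0$. Using $\alpha = a/q + \delta/(qQ)$ with $|\delta| < 1$ from the proof of Lemma~\ref{le:Lq}, we have $k'q\alpha = k'a + k'\delta/Q$ modulo integers, so $t + k'q\alpha \equiv t + k'\delta/Q \pmod 1$ with $|k'\delta/Q| \le k/Q \le 1/(100q^2)$. Since $t$ lies well inside $(0,1)$ (because $t_0 \in (1/20, 19/20)$ and $|t - t_0| \le 1/(100q^2)$), no wrap-around occurs and $t_{k'} = t + k'\delta/Q$. Therefore $|t_{k'} - t_0| \le |t - t_0| + |k'\delta/Q| \le 1/(50q^2)$ uniformly in $k'$.

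Next I would apply Lemma~\ref{le:davie_str} to $L_q(t_{k'})$ with $n = q$, $r = 0$ and $r' = 0$. The upper bound $L_q(t_{k'}) \le C \log(q+1)$ is immediate from the first part of that lemma. For the matching lower bound, I need $\mu := \min_{0 \le j < q} \{t_{k'} + j\alpha\} \gg 1/q^2$. For $j = 0$ one has $\{t_{k'}\} = t_{k'} \in (1/25, 24/25)$; for $1 \le j \le q-1$, the membership $t_0 \in \mathcal{B}_\alpha$ gives $\langle t_0 + j\alpha\rangle > 1/(20j^2)$, and the triangle inequality with $|t_{k'} - t_0| \le 1/(50q^2)$ together with $j < q$ shows $\langle t_{k'} + j\alpha\rangle \ge 1/(20j^2) - 1/(50q^2) \gg 1/q^2$. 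Consequently $\mu^{-1} \ll q^2$, $\log(\mu^{-1} + q) \ll \log(q+1)$, and the second half of Lemma~\ref{le:davie_str} yields $L_q(t_{k'}) \ge -C' \log(q+1)$.

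Combining the two estimates gives $|L_q(t_{k'})| \ll \log(q+1)$ uniformly in $k'$, and summing the $k = n/q$ contributions produces $|L_n(t)| \le \sum_{k'=0}^{k-1} |L_q(t + k'q\alpha)| \ll k\,\log(q+1) = (n/q)\,\log(q+1)$, as required. The main obstacle will be keeping uniformity in $k'$: one must ensure that the cumulative drift $k'\delta/Q$ together with the initial deviation $|t - t_0|$ never exceeds the $\mathcal{B}_\alpha$ buffer $1/(20j^2)$ at its worst case $j = q-1$. This is precisely what the two hypotheses $n \le Q/(100q)$ (keeping the drift below $1/(100q^2)$) and $|t - t_0| \le 1/(10q)^2$ (matching buffer) jointly secure.
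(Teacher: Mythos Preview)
Your proof is correct and follows essentially the same approach as the paper's: both decompose $L_n(t)$ as $\sum_{k'=0}^{n/q-1} L_q(t+k'q\alpha)$, use $\alpha=a/q+\delta/(qQ)$ to control the drift $k'q\alpha\bmod 1$ by $1/(100q^2)$, invoke the $\mathcal{B}_\alpha$ condition (via $n=-j$) to get $\langle t_0+j\alpha\rangle>1/(20j^2)$, and then apply both halves of Lemma~\ref{le:davie_str} with $\mu^{-1}\ll q^2$ to bound each block by $O(\log(q+1))$. The only cosmetic difference is that you first isolate $t_{k'}$ and bound $|t_{k'}-t_0|$, whereas the paper carries the perturbation $\eta k/Q$ inside the fractional part throughout; the arithmetic is identical.
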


\begin{proof}
 We can write  $L_n(t)=\sum_{k=0}^{n/q-1}L_q(t+kq\alpha)$.
 It is enough to prove
 $\min_{0\le j<q}\{t+kq\alpha+j\alpha\}>Cq^{-2}$ with $C$ some constant
 because in this case Lemma~\ref{le:davie_str} assures that each term in the sum contributes $O\big(\log(q+1)\big)$.

 By \eqref{eq:cont_frac}, $\alpha=a/q+\eta/(qQ)$ with $|\eta|<1$. Then
 \[
  \{t+kq\alpha+j\alpha\}
  \ge
  \big\{t_0+j\alpha+\frac{\eta k}{Q}\big\} - \frac{1}{(10q)^2}
  \ge
  \big\langle t_0+j\alpha+\frac{\eta k}{Q}\big\rangle - \frac{1}{(10q)^2}
  >
  \frac{1}{20 j^2}
  -
  \frac{2}{100q^2}
 \]
 where we have used $t_0\in\mathcal{B}_\alpha$ and $k<Q/(100q^2)$ for the last inequality. This is greater than $Cq^{-2}$ when $1\le j<q$. A similar argument applies for $j=0$ using that $\langle t_0\rangle >1/20$ for $t_0\in\mathcal{B}_\alpha$.
\end{proof}

\begin{lemma}\label{le:bound_n_large}
For $n_1,n_2\in\Z_{\ge 0}$,
if $q\mid n_2-n_1$ and $Q/(100q)\le n_2-n_1\le Q-q$ then
\[
 L_{n_2}(t)-L_{n_1}(t)\ll
 \log n_2+\frac{n_2-n_1}{q}\log (q+1)
 \qquad\text{for every }t\in \mathcal{B}_\alpha.
\]
\end{lemma}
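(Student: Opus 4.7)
Setting $m=n_2-n_1$ and $s=\{t+n_1\alpha\}$, an index shift gives $L_{n_2}(t)-L_{n_1}(t)=\sum_{j=0}^{m-1}(1+\log\{s+j\alpha\})=L_m(s)$ with $q\mid m$. The upper bound then comes essentially for free from the first part of Lemma~\ref{le:davie_str} applied with $n=m$ and $r=0$: one obtains $L_m(s)\le C(m/q)\log(q+1)$, which is already the desired estimate.

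The real work is the matching lower bound. The plan is to split $L_m(s)=\sum_{k=0}^{m/q-1}L_q(s+kq\alpha)$ and apply the second part of Lemma~\ref{le:davie_str} block by block, giving
\[
L_q(s+kq\alpha)\ge -C\log(\mu_k^{-1}+q),\qquad \mu_k:=\min_{0\le j<q}\{s+(kq+j)\alpha\}=\{s+J(k)\alpha\},
\]
for some $J(k)\in[kq,(k+1)q)\subset[0,m)$. Two properties of the finite family $\{\mu_k\}_{k=0}^{m/q-1}$ are crucial. First, since $t\in\mathcal{B}_\alpha$ and $0\le n_1+J(k)\le n_2-1$, the Diophantine condition defining $\mathcal{B}_\alpha$ forces $\mu_k\ge 1/(20n_2^2)$ for every $k$. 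Second, for $k\ne k'$ one has $0<|J(k)-J(k')|<m\le Q-q<Q$, so the best approximation property of $q$ yields $\langle(J(k)-J(k'))\alpha\rangle>1/(2Q)$; combined with the bound $\mu_k=O(1/q)$ that follows from the near-equispacing of the $q$ points exposed by Lemma~\ref{le:Lq}, this shows that all $\mu_k$ lie in a common interval of length less than $1/2$, hence are pairwise $1/(2Q)$-separated as real numbers.

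Ordering the values as $\mu_{(0)}<\mu_{(1)}<\cdots<\mu_{(m/q-1)}$, the separation forces $\mu_{(i)}\ge i/(2Q)$ for $i\ge 1$, while $\mu_{(0)}\ge 1/(20n_2^2)$. A Stirling estimate then produces
\[
\sum_{k=0}^{m/q-1}\log\mu_k^{-1}\le \log(20n_2^2)+\sum_{i=1}^{m/q-1}\log\frac{2Q}{i}=O\left(\log n_2+\frac{m}{q}\log\frac{2Qq}{m}\right).
\]
Here the hypothesis $m\ge Q/(100q)$ is essential: it forces $2Qq/m\le 200q^2$, so that the last logarithm collapses to $O(\log(q+1))$. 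Together with the trivial $(m/q)\log q$ coming from the ``$+q$'' inside each $\log(\mu_k^{-1}+q)$, this yields $L_m(s)\ge -C\bigl(\log n_2+(m/q)\log(q+1)\bigr)$, and combined with the upper bound closes the lemma. The main technical obstacle I foresee is cleanly isolating the single ``exceptional'' block (in which $\mu_k$ can be as small as $n_2^{-2}$) from the remaining ones, whose sizes are governed only by the $1/(2Q)$-separation; the quantitative spacing of the $q$ points within each block supplied by Lemma~\ref{le:Lq} is precisely what ensures that circular and real distance agree and the Stirling bookkeeping goes through.
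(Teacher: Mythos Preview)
Your argument is correct and arrives at the same bound, but the organization is genuinely different from the paper's. The paper does not separate the upper and lower estimates: it expands the full sum $\sum_{k=0}^{K-1}L_q(t+n_1\alpha+kq\alpha)$ directly via Lemma~\ref{le:Lq}, performs a translation $\ell\mapsto\ell+\ell_0$ so that the global minimum $\mu$ sits at $(k,\ell)=(k_0,0)$, and then bounds the double sum in absolute value by treating the columns $\ell\ne 0$ (each $O(\log(q+1))$) separately from the single column $\ell=0$, whose contribution is $\log(\mu^{-1})+O(K)+\big|\sum_{k=1}^K\log(k/Q)\big|$; Stirling and the hypothesis $Q/K\ll q^2$ finish. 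Your route instead takes the upper bound for free from Lemma~\ref{le:davie_str} and obtains the lower bound by applying that lemma block by block, reducing the problem to controlling $\sum_k\log\mu_k^{-1}$ through a $1/(2Q)$--separation of the block minima $\mu_k=\{t+(n_1+J(k))\alpha\}$. This is more modular---it reuses Lemma~\ref{le:davie_str} as a black box and isolates the Diophantine input (best approximation for $0<|J(k)-J(k')|<Q$) as a clean separation statement---while the paper's direct expansion is a bit shorter and handles both signs at once. One cosmetic point: your invocation of $\mu_k=O(1/q)$ to reconcile ``circular and real distance'' is unnecessary, since $|\{x\}-\{y\}|\ge\langle x-y\rangle$ always holds for $\{x\},\{y\}\in[0,1)$; the separation $|\mu_k-\mu_{k'}|>1/(2Q)$ therefore follows directly from the best approximation property without any size restriction on the $\mu_k$.
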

\begin{proof}
 We start writing
 \[
  L_{n_2}(t)-L_{n_1}(t)
  =
  \sum_{k=0}^{K-1}
  L_q(t+n_1\alpha+kq\alpha)
  \qquad\text{with}\quad
  K=\frac{n_2-n_1}{q}.
 \]
 Let us call $\mu$ to the minimum of the fractional parts appearing in these terms.
 We have $\mu>1/(20n_2^2)$ because $t\in\mathcal{B}_\alpha$.

 By Lemma~\ref{le:Lq} and doing a translation $\ell\mapsto \ell+\ell_0$ modulo $q$  if the minimum is reached for a certain $k=k_0$ and $\ell=\ell_0$,  this can be expanded as
 \[
  L_{n_2}(t)-L_{n_1}(t)
  =
  \sum_{k=0}^{K-1}
  \sum_{\ell=0}^{q-1}
  \bigg(1+\log \big\{\mu+\frac{\ell}{q}+\frac{(k-k_0)\delta+\delta_\ell}{Q}\big\}\bigg).
 \]
 Note that we have employed $\delta_{\ell+\ell_0}-\delta_{\ell_0}=\delta_{\ell}$.
 We know that $K\le Q/q-1$ and recalling the properties of $\delta$ and $\delta_\ell$ in  Lemma~\ref{le:Lq}, we have
 \[
  \frac{{|k-k_0|}}{2Q}
  <
  \Big|
  \frac{{(k-k_0)}\delta+\delta_\ell}{Q}
  \Big|
  <\frac{K}{Q}
  \le \frac 1q-\frac 1Q.
 \]
 If $\ell\ne 0$ then the fractional part can be safely compared with that of $\ell/q$
 to get $O\big(\log (q+1)\big)$ for the sum on $\ell\ne0$ and fixed each value of $k$. This gives~$O\big(K\log (q+1)\big)$.
 The contribution of $\ell=0$ is comparable to
 \[
  \log(\mu^{-1})
  +
  K
  +
  \Big|
  \sum_{k=1}^{K}
  \log \big( \frac{k}{Q}\big)
  \Big|
  \ll
  \log n_2 +K+
  \Big|{\sum_{k=1}^K}\log\frac{K}{Q}\Big|
  +
  \Big|\sum_{k=1}^K\log\frac{k}{K}\Big|.
 \]
 The {last} sum is $O(K)$ by Stirling's approximation. This gives the expected bound noting $Q/K\ll q^2$.
\end{proof}

With these lemmas we are ready to get an improvement of Lemma~\ref{le:davie_str} for restricted values of $t$.

\begin{proposition}\label{pr:bound_n_general}
Assume $Q\ge 4(10 q)^4$, $1\le n\le Q^{3/2}$  and let $N$ be the closest multiple of $Q$ to $n$. Then for
$t\in {\mathcal{B}_{\alpha}}$ we have
\[
 L_n(t)
 \ll
 q+\frac{|n-N|}{q}\log (q+1)+\frac{n+Q}Q \log(n+1).
\]
\end{proposition}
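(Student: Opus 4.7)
My plan is to decompose $n$ relative to the nearest multiple of $Q$. Write $n=N+s$ where $N=MQ$ is the closest multiple of $Q$ to $n$, so that $|s|=|n-N|\le Q/2$ and $M\le Q^{1/2}$ by the hypothesis $n\le Q^{3/2}$. The cocycle identity $L_{a+b}(t)=L_a(t)+L_b(t+a\alpha)$ then gives
\[
 L_n(t)=L_N(t)+L_s(t+N\alpha),
\]
with the obvious sign adjustment when $s<0$. A key preliminary observation, used throughout, is that $\langle kQ\alpha\rangle$ is tiny for all $k\le M$: by \eqref{eq:cont_frac} one has $\langle Q\alpha\rangle\le 1/Q$, hence $\langle kQ\alpha\rangle\le M/Q\le Q^{-1/2}\le (10q)^{-2}$ using the hypothesis $Q\ge 4(10q)^4$. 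Thus every shifted base point $t+kQ\alpha$ lies within $1/(10q)^2$ of $t\in\mathcal{B}_\alpha$, placing it in the near-$\mathcal{B}_\alpha$ regime required by Lemma~\ref{le:bound_n_small}.

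For the residual $L_s(t+N\alpha)$ I would write $|s|=Aq+r$ with $0\le r<q$. The multiple-of-$q$ piece $L_{Aq}$ is bounded by Lemma~\ref{le:bound_n_large} when $Aq\ge Q/(100q)$ and by Lemma~\ref{le:bound_n_small} otherwise, each yielding a contribution of order $\log(n+1)+(|s|/q)\log(q+1)$; both lemmas adapt to the slightly shifted base point because the $\mathcal{B}_\alpha$-derived lower bound $\{t+m\alpha\}>1/(20m^2)$ controls the smallest fractional part along the shifted orbit by $1/(20n^2)$. The trailing $r<q$ terms of $L_r$ sum to $O(q+\log(n+1))$: by the three-distance theorem only $O(1)$ of the fractional parts involved are of size $\le 1/q$, and these contribute $O(\log n)$ via the $\mathcal{B}_\alpha$ bound, while the remaining terms sum to $O(q)$ via the crude $\int_0^1|\log u|\,du=1$. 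This accounts for the $q+(|n-N|/q)\log(q+1)$ portion of the bound.

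The heart of the argument is estimating the cycle piece $L_N(t)=\sum_{k=0}^{M-1}L_Q(t+kQ\alpha)$ by $\ll ((n+Q)/Q)\log(n+1)$. Naively iterating Lemma~\ref{le:bound_n_large} on the longest multiple-of-$q$ chunk inside each cycle costs $(Q/q)\log(q+1)$ per cycle and accumulates to $(n/q)\log(q+1)$, which is too large by a factor of order $Q/q$. The improvement must come from a Stirling-type cancellation across the full $Q$-cycle: regrouping $L_N(t)=\sum_{m=0}^{N-1}(1+\log\{t+m\alpha\})$ and invoking the fine equidistribution of $\{t+m\alpha\}_{m<N}$ (with discrepancy $O(\log N/Q)$ coming from continued-fraction estimates), one expects $\sum_m\log\{t+m\alpha\}\sim -N$ with error $O(M\log(n+1))$, so that the $+1$ contributions precisely balance the $\log$ contributions up to the allowed error; the $\mathcal{B}_\alpha$ lower bound controls one exceptional small fractional part per cycle, contributing $O(\log(n+1))$ each. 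The principal obstacle is making this equidistribution-plus-Stirling argument quantitative under only $Q\ge 4(10q)^4$, which does not directly control the next convergent $q_{j+2}$; a rigorous implementation likely requires either a Koksma-type inequality or an analog of Lemma~\ref{le:Lq} at the $Q$-scale, carefully merging the bare $q$ in the final bound to absorb the boundary and equidistribution errors.
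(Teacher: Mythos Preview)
Your treatment of the residual piece $L_s$ is essentially the paper's argument, but your handling of $L_N(t)$ contains a genuine gap---and the fix is much simpler than the equidistribution/Koksma machinery you propose. The point you are missing is that Lemma~\ref{le:davie_str} is stated for an \emph{arbitrary} convergent denominator, so it applies with $Q=q_{j+1}$ in place of $q=q_j$. Since $N$ is a multiple of $Q$, the remainders $r$ and $r'$ in that lemma both vanish, and for $t\in\mathcal{B}_\alpha$ one may take $\mu^{-1}=20N^2$; the lemma then gives directly
\[
 |L_N(t)|\ll \frac{N+Q}{Q}\log(N+Q+1)\ll \frac{n+Q}{Q}\log(n+1).
\]
No Stirling cancellation across cycles, no discrepancy bound, and no control of $q_{j+2}$ is needed. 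Your worry that ``iterating Lemma~\ref{le:bound_n_large}'' on $q$-blocks inside each $Q$-cycle costs $(Q/q)\log(q+1)$ per cycle is correct, but that iteration is simply the wrong tool: working at the $Q$-scale collapses the whole cycle to a single application of Lemma~\ref{le:davie_str}.

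A secondary remark on the residual part: your cocycle decomposition evaluates $L_{Aq}$ at the shifted point $t+N\alpha$, but Lemma~\ref{le:bound_n_large} is stated only for $t\in\mathcal{B}_\alpha$. The paper avoids this by writing the three-term decomposition $L_n=L_N+(L_n-L_{n'})+(L_{n'}-L_N)$ with $n'=N\pm m$ the nearest point to $n$ that is $\equiv N\pmod q$; all three pieces are then differences of $L$-values at the \emph{same} base point $t\in\mathcal{B}_\alpha$, so Lemma~\ref{le:bound_n_large} applies verbatim to the last piece. Lemma~\ref{le:bound_n_small} is the only one that must absorb the shift by $N\alpha$, and it is stated precisely to allow a perturbation of size $1/(10q)^2$, which the bound $\langle N\alpha\rangle\le (n+Q/2)/Q^2\le 1/(10q)^2$ supplies.
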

\begin{proof}
 We introduce the decomposition
 \begin{equation}\label{eq:dec}
  L_n=L_N+\big(L_n-L_{n'}\big)+\big(L_{n'}-L_N\big)
 \end{equation}
 where $n'=N\pm m$, $m\in\Z^+$, with $\pm m$ the closest multiple of $q$ to $n-N$ (here the $\pm$ indicates the sign of $n-N$).
 Clearly we have
 $0\le m\le |n-N|+q/2$.

 Applying Lemma~\ref{le:davie_str} with $Q$ instead of $q$ and $\mu^{-1}=20N^2$, we have
 \[
  L_N(t)
  \ll
  \frac{N}{Q}\log(N+1)
  \ll
  \frac{n}{Q}\log(n+1).
 \]

 If $n>n'$, $L_n(t)-L_{n'}(t)$ is $L_{n-n'}(t+n'\alpha)$
 and if $n<n'$ is  $-L_{n'-n}(t+n\alpha)$. As $|n-n'|<q$ in both cases
 Lemma~\ref{le:davie_str}
 with $\mu^{-1}=20(n+2q)^2$
 assures
 \[
  L_n(t)-L_{n'}(t)
  \ll
  q+\log(n+q)\ll q+\log n.
 \]

 Finally, we have to deal with the last term in \eqref{eq:dec}.
 If $Q/(100q)<m$ then we are under the hypotheses of Lemma~\ref{le:bound_n_large} that gives
 \[
  L_{n'}(t)-L_{N}(t)
  \ll
  \log (N+m)+\frac{m}{q}\log (q+1).
 \]
 Hence
 \[
  L_{n'}(t)-L_{N}(t)
  \ll
  \log(n+1)+\frac{|n-N|}{q}\log (q+1).
 \]
 If $m\le Q/(100q)$, note firstly
  \[
  \langle N\alpha\rangle
  \le
  \frac{N}{Q} \langle Q\alpha\rangle
  \le
  \frac{N}{Q^2}
  \le
  \frac{n+Q/2}{Q^2}
  \le
  \frac{1}{(10q)^2}.
 \]
 If $n'\ge N$
 we write
 $L_{n'}(t)-L_N(t)=L_m(t+N\alpha)$
 and the previous bound proves that we can apply Lemma~\ref{le:bound_n_small} to get $O\big(q^{-1}m\log(q+1)\big)$.
 If $n'< N$ then
 $L_{N}(t)-L_{n'}(t)$
 coincides with $L_m(t+N\alpha)$ formally changing $\alpha$ by $-\alpha$ in the definition of $L_m$. As the denominators of the convergents of $\alpha$ and $-\alpha$ coincide except for a unit shift in the indexes, the same argument applies.

 Adding the contribution of the three terms in \eqref{eq:dec} we get the result.
\end{proof}

The analogue of Corollary~\ref{co:davie_bound} is:
\begin{corollary}\label{co:norm_bound}
Let $x=y=1_{{\mathcal{B}_{\alpha}}}$.  For any $|n| \le Q^{3/2}$ we have
\[
 \log\big(1+
 \|\widetilde{T}_{\alpha}^n x\|
 +
 \|(\widetilde{T}_{\alpha}^*)^{n}y\|
 \big)
 \ll
 q
 +\frac{|n|}{q}\log(q+1)
 +\frac{|n|+Q}{Q}\log(|n|+2).
\]
\end{corollary}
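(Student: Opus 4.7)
The approach is to translate each of $\|\widetilde{T}_\alpha^{\pm n}x\|_p$ and $\|(\widetilde{T}_\alpha^*)^{\pm n}y\|_p$ into an integral of $e^{\pm pL_n(\cdot)}$ over $\mathcal{B}_\alpha$, and then control the integrand pointwise by Proposition~\ref{pr:bound_n_general}. Applying the identities \eqref{eq:TL} together with the measure-preserving change of variables $s=\{t+n\alpha\}$ to $x=y=1_{\mathcal{B}_\alpha}$ yields, for $n\in\Z^+$,
\[
 \|\widetilde{T}_\alpha^{-n}x\|_p^p=\int_{\mathcal{B}_\alpha}e^{-pL_n(s)}\,ds,\qquad
 \|(\widetilde{T}_\alpha^*)^{n}y\|_p^p=\int_{\mathcal{B}_\alpha}e^{pL_n(s)}\,ds,
\]
\[
 \|\widetilde{T}_\alpha^{n}x\|_p^p=\int_{\mathcal{B}_\alpha}e^{pL_n(\{s-n\alpha\})}\,ds,\qquad
 \|(\widetilde{T}_\alpha^*)^{-n}y\|_p^p=\int_{\mathcal{B}_\alpha}e^{-pL_n(\{s-n\alpha\})}\,ds,
\]
where the last two integrands involve the backward orbit sum $L_n(\{s-n\alpha\})=\sum_{k=1}^n(1+\log\{s-k\alpha\})$. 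Since $|\mathcal{B}_\alpha|\le 1$, each norm is bounded by the exponential of the pointwise supremum of its integrand, and so the corollary is reduced to a pointwise estimate on $L_n(s)$ and on $L_n(\{s-n\alpha\})$, valid for $s\in\mathcal{B}_\alpha$ and $1\le n\le Q^{3/2}$.

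The forward-sum integrands are handled by Proposition~\ref{pr:bound_n_general} directly: for $s\in\mathcal{B}_\alpha$ and $n$ in the stated range one obtains
\[
 |L_n(s)|\ll q+\frac{|n-N|}{q}\log(q+1)+\frac{n+Q}{Q}\log(n+1),
\]
where the Vinogradov convention $A\ll B\Leftrightarrow |A|\le K|B|$ controls both signs. Using $|n-N|\le|n|$ and $\log(n+1)\le\log(|n|+2)$, this is exactly the bound demanded by the corollary, and taking $p$-th roots and logarithms closes the cases of $\widetilde{T}_\alpha^{-n}x$ and $(\widetilde{T}_\alpha^*)^{n}y$. The trivial case $n=0$ reduces to $\|x\|_p=\|y\|_p=|\mathcal{B}_\alpha|^{1/p}\le 1$.

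For the backward-sum integrands, the identical bound is obtained by observing that the entire chain of results leading to Proposition~\ref{pr:bound_n_general}---Lemmas~\ref{le:Lq}, \ref{le:davie_str}, \ref{le:bound_n_small} and~\ref{le:bound_n_large}---is symmetric under $\alpha\mapsto-\alpha$: the defining condition $\langle t-n\alpha\rangle>1/(20n^2)$ for $n\in\Z^*$ of $\mathcal{B}_\alpha$ is symmetric in the sign of $n$, so $\mathcal{B}_{-\alpha}=\mathcal{B}_\alpha$, and the arithmetic input \eqref{eq:cont_frac} is invariant under the same change, since the denominators of the convergents of $-\alpha$ agree with those of $\alpha$ up to an index shift of at most one, so $q$ and $Q$ may be reused up to absolute constants. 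Repeating the proof of Proposition~\ref{pr:bound_n_general} with $-\alpha$ in place of $\alpha$ therefore delivers the same estimate for $|L_n(\{s-n\alpha\})|$, completing the cases of $\widetilde{T}_\alpha^{n}x$ and $(\widetilde{T}_\alpha^*)^{-n}y$. The main subtlety of the argument is exactly this verification of time-reversal symmetry; once it is made explicit, no further arithmetic work is required beyond what was already done for Proposition~\ref{pr:bound_n_general}.
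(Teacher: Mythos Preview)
Your approach is the same as the paper's, but two points need to be made explicit before the argument is complete.

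First, Proposition~\ref{pr:bound_n_general} carries the standing hypothesis $Q\ge 4(10q)^4$, which you ignore when you say it applies ``directly''. When $Q<4(10q)^4$ one has $|n|\le Q^{3/2}\ll q^6$, so $\log(|n|+q+1)\ll\log(q+1)$ and the desired bound already follows from Corollary~\ref{co:davie_bound}; the paper disposes of this regime in exactly this way. Without this case split your invocation of Proposition~\ref{pr:bound_n_general} is outside its stated range of validity.

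Second, the backward sum you wrote, $L_n(\{s-n\alpha\})=\sum_{k=1}^n(1+\log\{s-k\alpha\})$, is not literally $L_n$ with $\alpha$ replaced by $-\alpha$: the latter is $\sum_{k=0}^{n-1}(1+\log\{s-k\alpha\})$, and the two differ by the boundary contribution $\log\{s-n\alpha\}-\log\{s\}$. For $s\in\mathcal{B}_\alpha$ these terms are $O(\log(n+1))$ (since $\{s\}\ge 1/20$ and $\langle s-n\alpha\rangle>1/(20n^2)$), which is harmlessly absorbed in the target bound, but this step must be said rather than folded into ``repeating the proof''. The paper writes out this decomposition explicitly before appealing to the $\alpha\mapsto-\alpha$ symmetry. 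As a minor aside, for $\|\widetilde{T}_\alpha^{n}x\|$ and $\|(\widetilde{T}_\alpha^*)^{n}y\|$ with $n>0$ the paper takes the simpler route of using the first (upper) inequality of Lemma~\ref{le:davie_str}, which holds for all $t$ and avoids the symmetry argument altogether; your route via Proposition~\ref{pr:bound_n_general} with $-\alpha$ is also valid once the two fixes above are made.
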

\begin{proof}
 We are going to show that the bound holds  for
 $A_n=\log \big(1+ \|\widetilde{T}_{\alpha}^n x\|+\|(\widetilde{T}_{\alpha}^*)^{n}y\|)$,
 $B_n=\log \big(1+ \|\widetilde{T}_{\alpha}^{-n} x\|)$
 and
 $C_n=\log \big(1+ \|(\widetilde{T}_{\alpha}^*)^{-n}y\|)$
 with $n\in\Z^+$.

 For $A_n$, it follows substituting in \eqref{eq:TL} the first bound of Lemma~\ref{le:davie_str}.

 If $Q< 4(10q)^4$ then $\log(|n|+q+1)\ll\log(q+1)$ and the bound for $B_n$ and $C_n$ follows from Corollary~\ref{co:davie_bound}.

 If $Q\ge 4(10q)^4$ Proposition~\ref{pr:bound_n_general} gives the bound for $B_n$.

 It remains to bound $C_n$ if $Q\ge 4(10q)^4$. With this purpose, we rewrite the last formula in \eqref{eq:TL} as
 $(T_\alpha^*)^{-n} f\big(\{t-n\alpha\}\big)= e^{-L_n(t-n\alpha)}f(t)$ and we note
 \[
  L_n(t-n\alpha)
  =
  -\log\{t\}
  +\log\{t-n\alpha\}
  +\sum_{j=0}^{n-1}
  \big(
  1+\log\{t-j\alpha\}
  \big).
 \]
 The sum coincides with $L_n(t)$ replacing $\alpha$ by $-\alpha$. As we mentioned before, the convergents of $\alpha$ and $-\alpha$ have the same denominators and then Proposition~\ref{pr:bound_n_general} applies also for this sum. On the other hand,
 $\log\{t\}$ and $\log\{t-n\alpha\}$ are $O\big(\log(|n|+1)\big)$ if $t\in\mathcal{B}_\alpha$.
\end{proof}

Once we have got this bound, the proof of our main result parallels that of Corollary~\ref{co:flattot}.

\begin{proof}[Proof of Theorem~\ref{te:main_result}]
Given $n\ne 0$, choose $q$ such that $q\le |n|^{2/3}<Q$.
In this range
\[
 q+\frac{|n|+Q}{Q}\log(|n|+2)
 \ll
 |n|^{2/3}+|n|^{1/3}\log(|n|+2)
\]
and by the condition (\ref{eq:denominators_growth}),
\[
 \frac{|n|}{q}\log(q+1)
 \ll
 \frac{|n|}{\log Q (\log \log Q)^2}\ll \frac{|n|}{\log |n| (\log\log |n|)^2}.
\]
Therefore by Corollary~\ref{co:norm_bound}, there exists $C>0$ such that for every $n\in\Z$
\begin{equation}\label{sp_bound}
 \max\big(\|\widetilde{T}_{\alpha}^n x\|,\|(\widetilde{T}_{\alpha}^*)^n y\|\big)
 \le
 \exp\left(
 \frac{C|n|}{\log(2+|n|)(\log \log (4+|n|))^2}
 \right)
\end{equation}
and the
result follows from Theorem~\ref{te:atzmon_macdonald} because the right hand side is a Beurling sequence.
\end{proof}

\begin{remark} Note that for Bishop-type operators of the form $T_{s, \alpha} f(t) = t^s  f(\{t+\alpha\}) $ where $s > 0$,  all the bounds computed above remain true replacing $L_n(t)$ by
		\begin{equation*}
			L_{s,n}(t) = \sum_{j=0}^{n-1} \big(s + s\log\{t+j\alpha\}\big),
		\end{equation*}
		and considering again $x=y=1_{\mathcal{B}_\alpha}$. This clearly follows from the fact $L_{s,n}(t)=s L_n(t)$. Therefore, Theorem \ref{te:main_result} is also valid for every $T_{s, \alpha}$ with $s>0$; and, in particular, we obtain a generalization of \cite[Theorem 4.7]{flattot}.
\end{remark}

\section{The limits of Atzmon Theorem}\label{Section 4}

In this section we shall show that it is not possible to improve much on Theorem \ref{te:main_result} by applying Atzmon's Theorem (Theorem \ref{te:atzmon}) to $\widetilde T_{\alpha}$. Before stating the main result of the section, observe that if $L^0[0,1)$ denotes the space of (classes of) measurable functions defined almost everywhere on [0,1), $\widetilde T_{\alpha}$ is a bijection in $L^0[0,1)$ with inverse:
$$
\widetilde T_{\alpha}^{-1}f(t) = e^{-1} \frac{f(\{t -\alpha\})}{\{t-\alpha\}}, \qquad t\in [0,1).
$$
Nevertheless, in $L^p[0,1)$, $1\leq p<\infty$, the operator $\widetilde T_{\alpha}$ is an injective, dense range operator. Hence, there exists a dense set of functions $g\in L^p[0,1)$ which have an infinite chain of backward iterates, that is, for all $n>0$ there is $g_n\in L^p[0,1)$, unique,  such that $\widetilde T_{\alpha}^{n} g_n=g$ (see \cite[Corollary 1.B.3]{Beauzamy}, for instance). As an abuse of notation in the next theorem, for $f\in L^p[0,1)$ and $n>0$, we will denote by $\|\widetilde T_{\alpha}^{-n} f\|$ the norm of the $n$-th backward iterate $\widetilde T_{\alpha}^{-n} f$ whenever it belongs to $L^p[0,1)$ or $\infty$, otherwise.
Our main aim in this section is to prove the following:

\begin{theorem}\label{te:atzmon_limit}
Let us define $\mathcal{M}$ as the set of irrationals such that the convergents $(a_j/q_j)_{j=0}^\infty$  in its continuous fraction satisfy
\begin{equation*}
 \log q_{j+1} =O\left(
 \frac{q_j}{\log q_j}
 \right)
\end{equation*}
for every $j\ge 0$. Then, if $\alpha$ is an irrational not in $\mathcal{M}$ we have
\[
 \sum_{n=-\infty}^{+\infty} \frac{\log (1+\|\widetilde T_{\alpha}^n f\|)}{1+n^2} =+\infty
\]
for any non-zero $f\in L^p[0,1)$, for $1\leq p< \infty$.
\end{theorem}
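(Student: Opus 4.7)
Split the series into forward and backward halves. The forward terms are non-negative, so we may assume that $\sum_{n\ge 0}\log(1+\|\widetilde T_\alpha^n f\|)/(1+n^2)<\infty$ and concentrate on proving
\[
\sum_{n\ge 1}\frac{\log(1+\|\widetilde T_\alpha^{-n}f\|)}{1+n^2}=+\infty.
\]
If $\widetilde T_\alpha^{-n_0}f\notin L^p[0,1)$ for some $n_0\ge 1$, then that term equals $+\infty$ by the convention preceding the statement and the series diverges trivially. So assume every backward iterate lies in $L^p[0,1)$, which already imposes a strong vanishing of $|f|$ near each point $\{-j\alpha\}$, $j\ge 0$.

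\textbf{Lower bound via Jensen.} Formula \eqref{eq:TL} gives
\[
\|\widetilde T_\alpha^{-n}f\|_p^p=\int_0^1|f(t)|^p\,e^{-pL_n(t)}\,dt.
\]
Applying Jensen's inequality to the convex function $e^{-p\,\cdot}$ with respect to the probability measure $d\nu=|f|^p\,dt/\|f\|_p^p$ produces
\[
\log\|\widetilde T_\alpha^{-n}f\|_p \ge \log\|f\|_p-\int_0^1 L_n\,d\nu,
\]
so it suffices to prove $\sum_{n\ge 1}\max\bigl(0,-\int_0^1 L_n\,d\nu\bigr)/(1+n^2)=+\infty$ for every such probability measure $\nu$.

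\textbf{Arithmetic of the integral.} Using the Fourier series $\log\{t\}=-1+\sum_{k\ne 0}c_k\,e^{2\pi ikt}$, with $c_k\ll (\log|k|)/|k|$, the constants cancel the $+1$'s in the definition of $L_n$ and the $j$-summation converts the exponentials into Dirichlet kernels $D_n(\theta)=\sum_{j=0}^{n-1}e^{2\pi ij\theta}$:
\[
\int_0^1 L_n\,d\nu=\sum_{k\ne 0}c_k\,\widehat\nu(-k)\,D_n(k\alpha).
\]
For $k=q_j$ a convergent denominator, $\langle q_j\alpha\rangle\ll 1/q_{j+1}$ forces $|D_n(q_j\alpha)|\gg n$ throughout the range $n\in[q_j,q_{j+1}]$, and these terms dominate in that window. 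The hypothesis $\alpha\notin\mathcal M$ furnishes arbitrarily large $j$ with $\log q_{j+1}\gg q_j/\log q_j$; once the correct sign is secured, summing the dominant $k=\pm q_j$ contribution against $1/(1+n^2)$ over $n\in[q_j,q_{j+1}]$ produces a block of size $\gg\log q_{j+1}/q_j$, unbounded along this subsequence, forcing the full series to diverge.

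\textbf{Main obstacle.} The principal technical hurdle is phase control: the factor $c_{q_j}\,\widehat\nu(-q_j)$ has an essentially arbitrary argument and can even be very small for exceptional $\nu$. To guarantee the needed negative contribution one has to either average over $n$, exploiting that the phase of $D_n(q_j\alpha)$ winds around as $n$ runs over a range of length $\sim q_{j+1}$, or combine several consecutive denominators so that at least one realises a favourable phase. The strong vanishing of $f$ near the backward orbit of $0$ imposed by the $L^p$-integrability reduction above helps rule out the pathological measures $\nu$ for which this phase-tracking would otherwise fail, completing the argument.
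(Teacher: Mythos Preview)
The proposal is incomplete, and the core strategy has a structural flaw that the final paragraph does not actually repair.

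After Jensen you need $-\int L_n\,d\nu$ to be large and positive for many $n$, but nothing in your argument secures the sign. You assume the forward half $\sum_{n\ge 0}$ converges and then never use that assumption again; yet the two halves are linked precisely through the sign of $L_n$. If $\nu$ places its mass where $L_n>0$, your Jensen bound on the backward iterates is vacuous while the \emph{forward} iterates are the large ones. Your Fourier expansion rewrites $\int L_n\,d\nu$ as $\sum_{k\ne 0} c_k\,\widehat\nu(-k)\,D_n(k\alpha)$, and you correctly flag---but do not resolve---that $c_{q_j}\widehat\nu(-q_j)$ has uncontrolled size and argument. The proposed fix of ``averaging over $n$'' does not help: for $n\ll q_{j+1}$ and $\theta=\{q_j\alpha\}\ll 1/q_{j+1}$ the kernel $D_n(q_j\alpha)=\sum_{m=0}^{n-1}e^{2\pi i m q_j\alpha}$ is essentially real and positive (each summand is close to $1$), so there is no phase winding to exploit. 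The appeal to ``strong vanishing of $f$ near the backward orbit of $0$'' is not made precise and does not obviously constrain $\widehat\nu(-q_j)$.

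The paper sidesteps all of this by working pointwise rather than through averages, and by treating the forward and backward iterates symmetrically instead of splitting them. The key observation is that for $q\mid n$ and $n\ll Q/q$ one has $L_n(t-n\alpha)\approx L_n(t)\approx \tfrac{n}{q}L(\{qt\})$ for a fixed increasing function $L$ on $(0,1)$; in particular $L_n(t-n\alpha)$ and $L_n(t)$ have the \emph{same sign}. A sublevel-set estimate (Lemma~\ref{le:sublevel_estimate}) then shows that outside a set of measure $O(\varepsilon)$ one has $\min(|L_n(t-n\alpha)|,|L_n(t)|)>\varepsilon\tfrac{n}{q}\log q$. Consequently, at every good point either $e^{pL_n(t-n\alpha)}$ or $e^{-pL_n(t)}$ is exponentially large, and plugging into \eqref{eq:T_plus_inverse} bounds the \emph{sum} $\|\widetilde T_\alpha^n f\|^p+\|\widetilde T_\alpha^{-n}f\|^p$ from below---no sign or phase issue arises. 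Summing over $n$ in windows determined by a subsequence of convergents witnessing $\alpha\notin\mathcal M$ gives disjoint blocks each contributing $\gg 1$.
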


Note that this result shows that there does not exist a sequence $(x_n)_{n\in \mathbb Z}$ satisfying the requirements in the statement of Theorem \ref{te:atzmon} whenever $\alpha \in \mathcal{M}$, and hence establishes a threshold limit in the growth of the denominators of the convergents of $\alpha$ for the application of Atzmon's Theorem to Bishop operators.\smallskip

In order to prove Theorem \ref{te:atzmon_limit}, we will show that either $\|\widetilde T_{\alpha}^n f\|$ or $\|\widetilde T_{\alpha}^{-n} f\|$ is large for many values of $n$. To accomplish such a task, we consider the equation
\begin{equation}\label{eq:T_plus_inverse}
 \|\widetilde T_{\alpha}^n f\|^p+\|\widetilde T_{\alpha}^{-n}f\|^p=\int_0^1 ( e^{pL_n(t-n\alpha)} + e^{-pL_n(t)} ) |f(t)|^p \, dt
\end{equation}
for any $n\ge 1$, which follows directly from \eqref{eq:Ln}, \eqref{eq:TL} and a change of variable. Now, $\alpha\not\in \mathcal{M}$ means that it is very well approximable by some rationals $a/q$,
which will imply that $L_n(t-n\alpha)$ is essentially identical to $L_n(t-n\frac aq)=L_n(t)$ for any $n$ near $q$ and divisible by it. In this situation, it appears that the integral in \eqref{eq:T_plus_inverse} must be large unless $|L_n(t)|$ is small, which should happen rarely. That is the basic idea behind the following result.

\begin{lemma}\label{le:sublevel_estimate}
Let $a/q$ and $A/Q$ be two consecutive convergents of $\alpha$ an irrational number, $q\ge 2$. For any $\varepsilon\in (0,1/4)$ there exists a set $S_{q,\varepsilon}\subset [0,1)$ of measure at most $20\varepsilon$ such that
\[
 \min(|L_n(t-n\alpha)|,|L_n(t)|)>\varepsilon\frac{n}{q}\log q
\]
for every $t\not\in S_{q,\varepsilon}$ and every $n\in [\varepsilon^{-2} q^2\log q,\varepsilon^2 Q/q]$.
\end{lemma}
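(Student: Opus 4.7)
My strategy is to reduce the whole estimate to an analysis of the single quantity $L_q(t)$, exploiting the near $q$-periodicity induced by $\alpha = a/q + \delta/(qQ)$. First I would compute $L_q(t)$ explicitly. By Lemma~\ref{le:Lq}, up to shifts of size $O(1/Q)$ in each argument,
\[
 L_q(t) = \sum_{\ell=0}^{q-1}\bigl(1 + \log\{t+\ell/q\}\bigr) = q + \sum_{k=0}^{q-1}\log\bigl((u+k)/q\bigr),
\]
where $u = \{qt\}$. Stirling's formula then yields
\[
 L_q(t) = u\log q + \log u - \tfrac{1}{2}\log q + O(1) =: g(u) + O(1).
\]
The function $g$ has slope $\asymp \log q$ near its unique zero $u^{*} \in (0,1)$, so the sub-level set $\{u : |g(u)| \le 3\varepsilon\log q\}$ is an interval of length $O(\varepsilon)$. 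Hence there is a set $E_1 \subset [0,1)$ of measure $O(\varepsilon)$, together with a boundary strip $\{t : \{qt\} < \varepsilon\}$, outside of which $|L_q(t)| \ge 2\varepsilon\log q$.

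Next I would pass from $L_q$ to $L_{Mq}$ for $M = \lfloor n/q\rfloor$. Since $q\alpha \equiv \delta/Q \pmod 1$, Lemma~\ref{le:Lq} gives $L_{Mq}(t) = \sum_{k=0}^{M-1} L_q(t + k\delta/Q)$, and the shifts are of size at most $M/Q \le \varepsilon^2/q^2$. Outside the boundary strip, $L_q$ is Lipschitz with constant $O(q^2/\varepsilon)$, so each summand is within $O(\varepsilon)$ of $L_q(t)$, giving $L_{Mq}(t) = M L_q(t) + O(M\varepsilon)$. For general $n = Mq + r$ with $0 \le r < q$ the remainder $\sum_{j=Mq}^{n-1}(1+\log\{t + j\alpha\})$ has at most $q$ terms, each of size $O(\log q)$ provided none of the fractional parts drops below $\varepsilon/q$; excluding a further set $E_2$ of measure $O(\varepsilon)$ where some $\{t + i\alpha\}$ with $|i| \le 2q$ is smaller than $\varepsilon/q$, the remainder is $O(q\log q)$.

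The quantity $L_n(t - n\alpha)$ is handled analogously: $\{n\alpha\} = O(\varepsilon^2/q)$ in the stated range of $n$, so $t - n\alpha$ differs from $t$ by a shift of order $\varepsilon^2/q$ modulo $1$, and the same decomposition yields $L_n(t - n\alpha) = M L_q(t) + O(q\log q + M\varepsilon)$, with the \emph{same} sign as $L_n(t)$. Taking $S_{q,\varepsilon}$ to be the union of $E_1$, $E_2$ and the boundary strip, a careful bookkeeping of absolute constants makes $|S_{q,\varepsilon}| \le 20\varepsilon$. For $t \notin S_{q,\varepsilon}$, both $|L_n(t)|$ and $|L_n(t-n\alpha)|$ are at least $2\varepsilon M\log q - O(q\log q + M\varepsilon)$, and the lower bound $n \ge \varepsilon^{-2} q^2 \log q$, equivalent to $M \ge \varepsilon^{-2} q \log q$, is precisely what is needed to dominate these errors by $\varepsilon M \log q = \varepsilon (n/q) \log q$.

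The principal obstacle is controlling these errors uniformly. The function $L_q$ has a logarithmic singularity as $\{qt\} \to 0$, so the Lipschitz bound degenerates near the boundary and the measure cost of excluding this region has to be weighed against the budget $20\varepsilon$. A second subtlety is that $S_{q,\varepsilon}$ must be defined \emph{independently} of $n$ yet control all errors throughout the entire range $[\varepsilon^{-2} q^2 \log q,\,\varepsilon^2 Q/q]$; verifying that the constants in the decomposition work simultaneously for every $n$ in this interval, without introducing $n$-dependence into $S_{q,\varepsilon}$, is the finicky bookkeeping part of the argument.
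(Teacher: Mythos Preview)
Your overall strategy---reduce both $L_n(t)$ and $L_n(t-n\alpha)$ to a multiple of the single-period quantity $L(\{qt\})=\sum_{\ell=0}^{q-1}\bigl(1+\log((u+\ell)/q)\bigr)$ via the rational approximation $\alpha\approx a/q$, then bound the sub-level set of $L$---is exactly the one the paper uses. But your execution has a genuine gap at the Stirling step. Writing $L_q(t)=g(u)+O(1)$ with $g(u)=(u-\tfrac12)\log q+\log u$ costs an additive constant that does \emph{not} scale with $\varepsilon\log q$; consequently the set where $|L_q(t)|\le 2\varepsilon\log q$ is only contained in $\{u:|g(u)|\le 2\varepsilon\log q+C\}$, which has length $O(\varepsilon)+O(1/\log q)$ rather than $O(\varepsilon)$. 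Since the lemma assumes only $q\ge 2$ and $\varepsilon\in(0,1/4)$ with no lower bound on $\varepsilon\log q$, the $1/\log q$ term can swamp the $20\varepsilon$ budget (take $q=2$ and $\varepsilon$ small). No amount of bookkeeping fixes this; the asymptotic formula has to be dropped.

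The paper avoids the issue by never invoking Stirling: it observes that $L'(x)=\sum_{\ell=0}^{q-1}1/(x+\ell)\ge\log q$ on $(0,1)$ and concludes directly that $\{x:|L(x)|\le 8\varepsilon\log q\}$ has measure at most $16\varepsilon$. It also bypasses your block decomposition $L_{Mq}=\sum_k L_q(t+k\delta/Q)$ with a Lipschitz estimate on $L_q$; instead it compares $\log\{t+j\alpha\}$ with $\log\{t+ja/q\}$ termwise for \emph{all} $0\le j<n$ at once, obtaining an error of at most $\varepsilon/q$ per term (total $\varepsilon n/q$), and then the exact $q$-periodicity of $j\mapsto\{t+ja/q\}$ collapses the sum to $(n'/q)\,L(\{qt\})$ plus a remainder of at most $q\bigl(1-\log(2\varepsilon/q)\bigr)<2\varepsilon n/q$ in the stated range of $n$. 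Two minor points in your sketch: the boundary strip must be two-sided ($\langle qt\rangle\le 2\varepsilon$ rather than $\{qt\}<\varepsilon$) to prevent wrap-around when the largest $\{t+ja/q\}$ is close to $1$; and your remainder is really $O\bigl(q\log(q/\varepsilon)\bigr)$, not $O(q\log q)$, though this is still absorbed by $n\ge\varepsilon^{-2}q^2\log q$.
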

\begin{proof}
Given $\varepsilon \in (0,1/4)$, pick any $n \in [\varepsilon^{-2}q^2 \log q, \varepsilon^2 Q/q]$. By \eqref{eq:cont_frac} we have $\alpha=a/q+\delta/(qQ)$ with $|\delta|< 1$ and our hypothesis assures $| j\delta/(qQ) |<\varepsilon^2/q^2$ for every $|j|\le n$. Hence for  $\langle q t\rangle >2\varepsilon$, we have $\langle t+ja/q\rangle>2\varepsilon/q$ and
\[
\Big|
\log\big\{t+j\alpha\big\}
-
\log\big\{t+\frac{ja}{q}\big\}
\Big|
\le
\Big|
\log\Big(\frac{2\varepsilon}{q}-\frac{\varepsilon^2}{q^2}\Big)
-
\log\Big(\frac{2\varepsilon}{q}\Big)
\Big|
\le
\frac{\varepsilon}{q}.
\]
With this and the $q$-periodicity in $j$ of $\log\{t+ja/q\}$ we deduce
\[
\Big|
L_n(t)-\frac{n'}{q} L(\{qt\})
\Big|
\le
\varepsilon\frac{n}{q}
+
\Big|
\sum_{j =n'}^{n-1}
\Big(1+\log\{t+\frac{j a}{q}\}\Big)
\Big|
\]
where
$L(x)=\sum_{\ell=0}^{q-1}\big(1+\log((x+\ell)/q)\big)$ and $n'=q \lfloor n/q\rfloor$.
The trivial bound for the last term is $q\big(1-\log(2\varepsilon/q)\big)$ which is less than $2\varepsilon n/q$ in our range.
A similar argument applies for $L_n(t-n\alpha)$. Hence
\begin{equation}\label{eq:minaux}
\min\big(
|L_n(t-n\alpha)|, |L_n(t)|
\big)
>\frac{n-q+1}{q}\,
|L(\{qt\})|
-3\varepsilon\frac nq
\qquad
\text{for}\quad
\langle q t\rangle >2\varepsilon.
\end{equation}
The function $L$ is increasing in $(0,1)$ and $L'(x)\ge \log q$. Then the measure of
$\{x\,:\, |L(x)|\le 8\varepsilon\log q\}$ is at most
$16\varepsilon$ and \eqref{eq:minaux} gives the expected bound except in the set
\[
S_{q,\varepsilon}=
\{t\in [0,1):
\langle qt \rangle \le 2\varepsilon\} \cup
\{t \in [0,1) : |L(\{qt\})|\le 8\varepsilon\log q\}
\]
which has measure at most $20\varepsilon$.
\end{proof}

With this lemma, we are ready to prove the theorem.

\begin{proof}[Proof of Theorem \ref{te:atzmon_limit}]
Without loss of generality, assume that $f\in L^p[0,1)$ has an infinite chain of backward iterates $\widetilde T_\alpha^{-n}f\in L^p[0,1)$ and suppose $\|f\|=1$.  If $\alpha$ is an irrational not in $\mathcal{M}$, we have $\limsup_{j\to\infty} \frac{\log q_{j+1}}{q_j/\log q_j}=+\infty$, there exists a subsequence $(q_{j_m})_{m\in \N}$ such that
\[
 \frac{\log Q_{j_m}}{q_{j_m}/\log q_{j_m}} >m^2,   \qquad \text{with } \quad Q_{j_m}=q_{j_m+1}
\]
for every $m > 2$. Now, consider the sets $S_{m_*}=\cup_{m\ge m_*} S_{q_{j_m},1/m^2}$, with $S_{q,\varepsilon}$ defined as in Lemma \ref{le:sublevel_estimate}. Since $\sum_{m=1}^{\infty} m^{-2}<\infty$ we have that $\lim_{m_*\to\infty} \int_{S_{m_*}} |f(t)|^p \,dt=0$, so there exists $m_*$ such that $\int_{S_{m_*}} |f(t)|^p dt <1/2$. This and \eqref{eq:T_plus_inverse} imply that
\[
  \|\widetilde T_{\alpha}^n f\|^p+\|\widetilde T_{\alpha}^{-n}f\|^p\ge \frac 12 \inf_{t\not\in S_{m_*}} \big(e^{pL_n(t-n\alpha)}+e^{-pL_n(t)}\big).
\]
By Lemma \ref{le:sublevel_estimate} with $q=q_{j_m}$, $m\ge m_*$, and $\varepsilon=1/m^2$ we have
\[
\|\widetilde T_{\alpha}^n f\|^p+\|\widetilde T_{\alpha}^{-n}f\|^p\ge \frac 12 e^{pn\log q_{j_m}/(m^2 q_{j_m})}
\]
for any $n\in [m^2q_{j_m}^2\log q_{j_m},m^{-2}Q_{j_m}/q_{j_m}]$, so that
\begin{equation}\label{eq:indexed_sum}
 \sum_{m^2q_{j_m}^3<|n|<m^{-2}Q_{j_m}/q_{j_m}}\frac{\log (1+\|\tilde T_{\alpha}^n f\|)}{1+n^2} \gg \log\Big(\frac{Q_{j_m}}{m^4 q_{j_m}^4}\Big)\frac{\log q_{j_m}}{q_{j_m}} \frac{1}{m^2}\gg 1
\end{equation}
for any $m$ sufficiently large. As a consequence of
\[
\frac{Q_{j_m}}{m^2 q_{j_m}}  <  Q_{j_m}  = q_{j_m+1} \le q_{j_{m+1}} \le (m+1)^2 q_{j_{m+1}}^3,
\]
we observe that the intervals defined by the indexes of the sum in \eqref{eq:indexed_sum} do not overlap for different values of $m$, hence the theorem follows.
\end{proof}

\section{Spectral subspaces of Bishop operators} \label{Section 5}

In this section, we deal with \emph{local spectral subspaces} of Bishop operators, which are hyperinvariant subspaces (not necessarily closed) associated to closed {subsets} of the spectrum. While local spectral subspaces are closed for a large class of operators, those satisfying the so-called \emph{Dunford property $(C)$},  as a consequence of the estimates obtained in the previous section, our main result in this section is that all Bishop operators do not belong to such a class; and therefore there exist local spectral subspaces which are not closed.

Before going further, we recall some preliminaries regarding local spectral theory, and refer to Laursen and Neumann monograph \cite{LAURSEN-NEUMANN_book} for more on the subject.

\subsection{Local spectral theory background} Let $\mathcal{X}$ denote an arbitrary complex Banach space and $\mathcal{L}(\mathcal{X})$ the space of linear bounded operators on $\mathcal{X}$. For an open subset $U\subseteq \mathbb{C} $, let $\mathcal{H}(U,\mathcal{X})$ be the Fr\'{e}chet space of analytic functions from $U$ to $\mathcal{X}$ endowed with the topology of uniform convergence on compact subsets.

Given any $T \in \mathcal{L}(\mathcal{X})$ and $x \in \mathcal{X}$, let $\rho_T(x)$ be the local resolvent of $T$ at $x$, i.e. the set of $\lambda \in \mathbb{C}$ for which there exists an open neighborhood $U_\lambda \ni\lambda$ and an analytic function $f \in\mathcal{H}(U_\lambda, \mathcal{X})$ which fulfills the equation
\begin{equation}\label{EQ_local_resolvent}
	(T-zI)\, f(z)= x, \quad\text{for every } z \in U_\lambda.
\end{equation}
By $\sigma_T(x)$ we will denote the local spectrum of $T$ at $x$, i.e. the complementary set of the local resolvent. Of course, bearing in mind that the function $f(z) = (T-zI)^{-1}\, x$ is analytic in the whole resolvent set, we have $\sigma_T(x) \subseteq \sigma(T)$. In the sequel, we shall use the following properties concerning the local spectra of an operator:
\renewcommand{\labelenumi}{(\alph{enumi})}
\begin{enumerate}
	\item $\sigma_T(a x +b y) \subseteq \sigma_T(x)\cup \sigma_T(y)$ for every $x,\, y \in \mathcal{X}$ and $a,\, b\in\mathbb{C}$.
	\item $\sigma_T(x) \subseteq \sigma(T)$ for every $x \in \mathcal{X}$.
	\item $\sigma_T(S\, x) \subseteq \sigma_T(x)$ for every $S \in\mathcal{L}(\mathcal{X})$ which commutes with $T$.
\end{enumerate}

Whenever the solution of \normalfont{(\ref{EQ_local_resolvent})} is unique for every $\lambda \in \mathbb{C}$, we will say that $T$ satisfies the single-valued extension property (abbrev. SVEP) and we will denote by $f_x(z)$ such local resolvent function. In such a case, the local spectral radius $r_T(x)$ fulfills the equality
\begin{equation*}
	r_T(x) = \max\left\lbrace |\lambda| \, : \, \lambda \in \sigma_T(x) \right\rbrace \; \text{ for every } x \in \mathcal{X}.
\end{equation*}
Reminding the point spectrum and the compression spectrum of the Bishop operators, $\sigma_p(T_\alpha)=\emptyset$ and $\sigma_c(T_\alpha)=\emptyset$, it is somewhat direct to prove that both $T_\alpha$ and $T_\alpha^*$ have the SVEP, indeed, the same holds for every Bishop-type operator \cite[Prop. 3.6]{ARTICLE-gallardo_monsalve}. \\

Our first result regarding the local spectrum of Bishop operators by means of the estimates obtained in the previous section is the following:

\begin{theorem}\label{THM-local_spectrum}
Let $\alpha\in (0,1)$ be any irrational number, $T_{\alpha}$ the Bishop operator acting on $L^p[0,1)$, $1\leq p<\infty$,  and   $$\mathcal{B}_{\alpha}
 =
 \Big\{
 \frac{1}{20}
 <t<
 \frac{19}{20}
 \,:\,
 \langle t-n\alpha\rangle >\frac{1}{20n^2}
 ,\
 \forall n\in\Z^*
 \Big\}.$$ Then, both local {spectra} $\sigma_{T_\alpha}\big(1_{\mathcal{B}_\alpha}\big)$  and $\sigma_{T^*_\alpha}\big(1_{\mathcal{B}_\alpha}\big)$ are contained in the circle of radius $e^{-1}$, that is,
	\begin{equation*}
		\sigma_{T_\alpha}\big(1_{\mathcal{B}_\alpha}\big) \subseteq \partial D(0,e^{-1}) \;\;\text{and}\;\;\sigma_{T^*_\alpha}\big(1_{\mathcal{B}_\alpha}\big) \subseteq \partial D(0,e^{-1}).
	\end{equation*}
\end{theorem}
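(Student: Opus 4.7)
The plan is to prove $\sigma_{T_\alpha}(1_{\mathcal{B}_\alpha}) \subseteq \partial D(0,e^{-1})$ and $\sigma_{T_\alpha^*}(1_{\mathcal{B}_\alpha}) \subseteq \partial D(0,e^{-1})$ by analytically extending the corresponding local resolvents across the open disk $D(0,e^{-1})$. Since the global resolvent $(zI-T_\alpha)^{-1}1_{\mathcal{B}_\alpha}$ is already defined on $\mathbb{C}\setminus\overline{D}(0,e^{-1})$, and $\sigma_{T_\alpha}(1_{\mathcal{B}_\alpha})\subseteq \sigma(T_\alpha)=\overline{D}(0,e^{-1})$ automatically (and similarly for $T_\alpha^*$), producing such extensions is exactly what forces the local spectra to sit in the boundary circle.

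Because $T_\alpha$ and $T_\alpha^*$ have empty point spectrum, the backward iterates $T_\alpha^{-n}1_{\mathcal{B}_\alpha}$ and $(T_\alpha^*)^{-n}1_{\mathcal{B}_\alpha}$ are uniquely determined measurable functions, given explicitly by \eqref{eq:TL} as $\widetilde T_\alpha^{-n}1_{\mathcal{B}_\alpha}(t)=e^{-L_n(t-n\alpha)}1_{\mathcal{B}_\alpha}(\{t-n\alpha\})$ and $(\widetilde T_\alpha^*)^{-n}1_{\mathcal{B}_\alpha}(t)=e^{-L_n(t)}1_{\mathcal{B}_\alpha}(\{t+n\alpha\})$; Corollary \ref{co:norm_bound} applied with a pair of consecutive convergent denominators $(q,Q)$ of $\alpha$ with $n\le Q^{3/2}$ (which is available for $Q$ large enough) ensures they lie in $L^p[0,1)$. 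The natural candidate extensions into $D(0,e^{-1})$ are the backward power series
\[
F(z)=\sum_{n=1}^{\infty} T_\alpha^{-n}1_{\mathcal{B}_\alpha}\, z^{n-1}, \qquad G(z)=\sum_{n=1}^{\infty} (T_\alpha^*)^{-n}1_{\mathcal{B}_\alpha}\, z^{n-1},
\]
which formally satisfy $(T_\alpha-zI)F(z)=1_{\mathcal{B}_\alpha}$ and $(T_\alpha^*-zI)G(z)=1_{\mathcal{B}_\alpha}$ by telescoping. Convergence on the whole disk $|z|<e^{-1}$, after the normalization $T_\alpha^{-n}=e^n\widetilde T_\alpha^{-n}$, reduces to the key estimate
\[
\limsup_{n\to\infty}\|\widetilde T_\alpha^{-n}1_{\mathcal{B}_\alpha}\|^{1/n}\le 1
\]
together with its adjoint analogue; once in place, $F$ and $G$ glue with the outer global resolvents to produce analytic local resolvents on $\mathbb{C}\setminus \partial D(0,e^{-1})$.

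The heart of the argument is this uniform norm estimate, which I would obtain from Corollary \ref{co:norm_bound}: for each $n$, select a pair of consecutive convergent denominators $(q_j,q_{j+1})$ of $\alpha$ with $q_j\le n^{2/3}<q_{j+1}$, so that the hypothesis $n\le q_{j+1}^{3/2}$ is met. The resulting bound
\[
\log(1+\|\widetilde T_\alpha^{-n}1_{\mathcal{B}_\alpha}\|)\ll q_j+\frac{n}{q_j}\log(q_j+1)+\frac{n+q_{j+1}}{q_{j+1}}\log(n+2)
\]
has its first and third contributions automatically $o(n)$, of orders $n^{2/3}$ and $n^{1/3}\log n$ respectively. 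The main obstacle is the middle term, which is $o(n)$ precisely when $q_{j(n)}\to\infty$ as $n\to\infty$; this is delicate for Liouville-type $\alpha$ whose convergent denominators exhibit huge gaps, and requires a careful rebalancing of the choice of the index $j(n)$ against the arithmetic structure of $\alpha$, possibly using finer information about $\mathcal{B}_\alpha$ than what enters Corollary \ref{co:norm_bound}. Since that corollary controls the $T_\alpha$ and $T_\alpha^*$ iterates simultaneously, the same reasoning handles $T_\alpha^*$ verbatim.
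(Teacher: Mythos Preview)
Your approach is exactly the paper's: both reduce to showing $\limsup_{n\to\infty}\|\widetilde T_\alpha^{-n}1_{\mathcal{B}_\alpha}\|^{1/n}\le 1$ (and the adjoint analogue) via Corollary~\ref{co:norm_bound} with the choice $q_j\le n^{2/3}<q_{j+1}$, and then conclude that the backward resolvent series converges on all of $D(0,e^{-1})$. The only difference is that you stop short at an obstacle that is not there.

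The middle term $\frac{n}{q_{j(n)}}\log(q_{j(n)}+1)$ is $o(n)$ for \emph{every} irrational $\alpha$, with no rebalancing and no finer input about $\mathcal{B}_\alpha$ required. Since $(q_j)_{j\ge0}$ is strictly increasing and unbounded, once $n^{2/3}\ge q_J$ one has $j(n)\ge J$; hence $q_{j(n)}\to\infty$ as $n\to\infty$ regardless of how large the gaps $q_{j+1}/q_j$ may be, and $q_{j(n)}^{-1}\log(q_{j(n)}+1)\to 0$. The paper packages this as: given $\varepsilon>0$, choose $m$ large enough that $C\big(n^{-1/3}+q_m^{-1}\log(q_m+1)+n^{-2/3}\log(n+2)\big)\le\varepsilon$ whenever $n^{2/3}\ge q_m$, so the series is analytic on $D(0,e^{-\varepsilon})$ for every $\varepsilon>0$. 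You may be conflating this mere subexponential-growth requirement with the much stronger Beurling summability condition of Section~\ref{Section 4}, which does fail for extreme Liouville $\alpha$; the local-spectrum statement needs only the former, and that holds unconditionally.
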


\begin{proof}

	We will just prove the theorem for $T_\alpha$, an analogous argument works for $T_\alpha^*$. Let us denote the convergents of $\alpha$ by
$(a_j/q_j)_{j=0}^\infty$. Then, by Corollary \ref{co:norm_bound}, we know that for every $q_m \leq n^{2/3} \leq q_{m+1}$, we have
	\begin{equation*}
		\log||\widetilde{T}_\alpha^{-n}\, 1_{\mathcal{B}_\alpha}|| \leq C\cdot\left( q_m + \frac{n}{q_m}\log(q_m+1) + \frac{n+q_{m+1}}{q_{m+1}}\log(n+2)\right),
	\end{equation*}
	where $C> 0$ is an absolute constant independent of $m$. Taking into account the range of $n$, this implies
	\begin{equation*}
		||\widetilde{T}_\alpha^{-n}\, 1_{\mathcal{B}_\alpha}|| \leq \exp\left( \, C \cdot\left(n^{-1/3} + \frac{1}{q_m}\log(q_m+1) + n^{-2/3}\log(n+2)\right)\right)^n.
	\end{equation*}
	Nevertheless, for every $\varepsilon > 0$, there exists $m$ such that
	\begin{equation*}
		 C\cdot\left(n^{-1/3} + \frac{1}{q_m}\log(q_m+1) + n^{-2/3}\log(n+2)\right) \leq \varepsilon
	\end{equation*}
	for every $q_m \leq n^{2/3} \leq q_{m+1}$. In particular, as a consequence of this bound, we have that the function
	\begin{equation*}
		f_{1_{\mathcal{B}_\alpha}}(z)=\sum_{n=1}^\infty \big(\widetilde{T}_{\alpha}^{-n} \, 1_{\mathcal{B}_\alpha}\big)\cdot z^{n-1}
	\end{equation*}
	is analytic for $z \in D(0,e^{-\varepsilon})$. Since $f_{1_{\mathcal{B}_\alpha}}$ fulfills the equation $(\widetilde{T}_\alpha - zI)\,f_{1_{\mathcal{B}_\alpha}}(z) = 1_{\mathcal{B}_\alpha}$, this implies $D(0,e^{-\varepsilon})\subseteq \rho_{\widetilde{T}_\alpha} (1_{\mathcal{B}_\alpha})$. Finally, making $\varepsilon$ arbitrarily small, the theorem follows.
\end{proof}

As it is pointed out in \cite{Atzmon-1980}, since $T_\alpha$ has the SVEP, it may be seen that $\sigma_{T_\alpha}\big(1_{\mathcal{B}_\alpha}\big)$ coincides with the singular points within $\partial D(0,e^{-1})$ of its local resolvent function. This allows us to identify easily some of the basic properties which satisfy $\sigma_{T_\alpha}\big(1_{\mathcal{B}_\alpha}\big)$ (and therefore, $\sigma_{T_\alpha^*}\big(1_{\mathcal{B}_\alpha}\big)$ as well).
\begin{corollary}\label{COR-shape_local_spectrum}
	Let $\alpha\in (0,1)$ be any irrational number. Then, $\sigma_{T_\alpha} \big(1_{\mathcal{B}_\alpha}\big)$ (resp. $\sigma_{T_\alpha^*}\big(1_{\mathcal{B}_\alpha}\big)$) is symmetric with respect to the real axis and contains the point $\lambda = e^{-1}$.
\end{corollary}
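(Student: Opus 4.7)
The proof splits into two parts — the symmetry claim and the membership of $e^{-1}$ — and I plan to carry out both for $T_\alpha$, since the argument for $T_\alpha^*$ is entirely parallel: its iterates are $T_\alpha^{*n} g(t) = \prod_{j=1}^n \{t - j\alpha\}\cdot g(\{t - n\alpha\})$, which enjoy exactly the same positivity and ergodic properties after replacing $\alpha$ by $-\alpha$. For the symmetry, I will exploit that $T_\alpha$ sends real-valued functions to real-valued functions and that $1_{\mathcal{B}_\alpha}$ is real-valued: if $f(z)$ denotes the (unique, by SVEP) local resolvent of $T_\alpha$ at $1_{\mathcal{B}_\alpha}$, then $\overline{f(\bar z)}$ also solves $(T_\alpha - zI)g(z) = 1_{\mathcal{B}_\alpha}$, so by uniqueness $\overline{f(\bar z)} = f(z)$. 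This reflection identity forces $\sigma_{T_\alpha}(1_{\mathcal{B}_\alpha})$, the set of singular points of $f$, to be invariant under complex conjugation.

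For the membership of $e^{-1}$, I plan a Pringsheim-type scalar reduction. The decisive observation is pointwise positivity:
\[
T_\alpha^n 1_{\mathcal{B}_\alpha}(t) = \prod_{j=0}^{n-1}\{t + j\alpha\}\cdot 1_{\mathcal{B}_\alpha}(\{t + n\alpha\}) \geq 0.
\]
Integrating the Laurent expansion $f(z) = -\sum_{n\geq 0} T_\alpha^n 1_{\mathcal{B}_\alpha}\cdot z^{-n-1}$ (valid for $|z| > e^{-1}$) against the positive functional $g \mapsto \int_0^1 g(t)\,dt$ yields the scalar function $u(z) := -\int_0^1 f(z)(t)\,dt = \sum_n c_n z^{-n-1}$ with $c_n := \int_0^1 T_\alpha^n 1_{\mathcal{B}_\alpha}(t)\,dt \geq 0$. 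After the change of variable $w = 1/z$, this becomes the ordinary power series $v(w) := \sum_n c_n w^{n+1}$ with non-negative coefficients and some radius of convergence $R$. Since $u$ is analytic wherever $f$ is, the singularities of $v$ are contained in $\{1/\lambda : \lambda \in \sigma_{T_\alpha}(1_{\mathcal{B}_\alpha})\} \subseteq \partial D(0, e)$ by Theorem~\ref{THM-local_spectrum}, which yields the dichotomy $R \in \{e, +\infty\}$. Once $R = e$ is secured, Pringsheim's theorem produces an actual singularity of $v$ at $w = e$, equivalently a singularity of $u$ — and hence of $f$ — at $z = e^{-1}$, giving $e^{-1} \in \sigma_{T_\alpha}(1_{\mathcal{B}_\alpha})$.

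The principal step, and the main obstacle, is to rule out $R = +\infty$ by showing that $c_n$ does not decay super-exponentially. My plan is to invoke Birkhoff's pointwise ergodic theorem applied to $\log(\cdot) \in L^1[0,1)$ (whose integral on $[0,1)$ is $-1$) and to the ergodic rotation $t \mapsto \{t + \alpha\}$:
\[
\tfrac{1}{n}\sum_{j=0}^{n-1}\log\{t + j\alpha\} \longrightarrow -1 \quad \text{for a.e.\ }t.
\]
Fixing any $\delta > 0$, Egorov's theorem then produces a set $E_\delta \subseteq [0,1)$ with $|E_\delta| > 1/2$ and an integer $N_\delta$ such that $\prod_{j < n}\{t + j\alpha\} \geq e^{-(1+\delta)n}$ for every $t \in E_\delta$ and $n \geq N_\delta$. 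Applying Birkhoff again to $1_{\mathcal{B}_\alpha}$ and using dominated convergence gives the Ces\`aro limit $\frac{1}{N}\sum_{n < N}|E_\delta \cap (\mathcal{B}_\alpha - n\alpha \bmod 1)| \to |E_\delta|\cdot|\mathcal{B}_\alpha| > 0$, so that this measure exceeds $\tfrac 12|E_\delta|\cdot|\mathcal{B}_\alpha|$ for an infinite set of indices $n$. Restricting the integral defining $c_n$ to $E_\delta \cap (\mathcal{B}_\alpha - n\alpha)$ along such $n$ yields $c_n \gg e^{-(1+\delta)n}$, whence $R \leq e^{1+\delta} < +\infty$ and the dichotomy forces $R = e$. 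The trivial lower bound $\{t + j\alpha\} \geq 1/(20 j^2)$ coming from $t \in \mathcal{B}_\alpha$ alone is far too weak to capture the correct exponential scale, so the ergodic averaging step is indispensable — this is the technical heart of the argument.
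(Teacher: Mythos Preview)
Your proof is correct, and the symmetry argument is essentially the paper's. For the membership of $e^{-1}$, however, you take a genuinely different route. The paper works with the \emph{backward}-iterate expansion $f_{1_{\mathcal{B}_\alpha}}(z)=\sum_{n\ge 1}\widetilde{T}_\alpha^{-n}1_{\mathcal{B}_\alpha}\,z^{n-1}$ on $|z|<1$ and runs a vector-valued Pringsheim argument directly: since $T_\alpha$ has SVEP the local spectrum of $1_{\mathcal{B}_\alpha}$ is nonempty, so some $e^{i\vartheta}\in\partial\mathbb D$ is already known to be singular; pointwise positivity of $\widetilde{T}_\alpha^{-n}1_{\mathcal{B}_\alpha}$ then gives $\|\partial^m f_{1_{\mathcal{B}_\alpha}}(re^{i\vartheta})\|\le\|\partial^m f_{1_{\mathcal{B}_\alpha}}(r)\|$ for every $m$, forcing the Taylor radius at $r$ to be at most $1-r$ and hence $z=1$ to be singular as well. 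Your approach instead passes to a \emph{scalar} series $v(w)=\sum c_n w^{n+1}$ via integration and invokes classical Pringsheim, but pays for this reduction by having to pin down the exact radius $R=e$, which you accomplish through Birkhoff's theorem and Egorov. This works, but the ergodic step you call ``indispensable'' can in fact be bypassed: since $T_\alpha^n 1_{\mathcal{B}_\alpha}\ge 0$ pointwise, your $c_n$ equals $\|T_\alpha^n 1_{\mathcal{B}_\alpha}\|_{L^1}$, and the local spectral radius formula in $L^1$ (SVEP together with Theorem~\ref{THM-local_spectrum}) already yields $\limsup c_n^{1/n}=e^{-1}$. The paper's route sidesteps the radius computation entirely by borrowing the existence of a singularity from SVEP at the outset and then transporting it to the real point by positivity.
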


\begin{proof}
We will just prove the result for $T_\alpha$. The first claim is a consequence of $\overline{f_{1_{\mathcal{B}_\alpha}(z)}} = f_{1_{\mathcal{B}_\alpha}}(\overline{z})$, where $f_{1_{\mathcal{B}_\alpha}}$ is as above. Note that it may be deduced from the fact that $\widetilde{T}_\alpha^{-n} \, 1_{\mathcal{B}_\alpha}$ are all real-valued for every $n\in\mathbb{Z}^+$.
	
	For the second claim, given any $z_0 \in \mathbb{D}$, the Taylor series of $f_{1_{\mathcal{B}_\alpha}}$ about $z_0$ is
	\begin{equation*}
		f_{1_{\mathcal{B}_\alpha}}(z) = \sum_{m=0}^\infty \frac{\partial^m f_{1_{\mathcal{B}_\alpha}}(z_0)}{m!} \cdot (z-z_0)^m,
	\end{equation*}
	where
	\begin{equation*}
		\partial^m f_{1_{\mathcal{B}_\alpha}}(z_0) = \sum_{n=m}^\infty n\cdot (n-1)\cdots (n-m+1) \cdot \big(\widetilde{T}_\alpha^{-n-1}\, 1_{\mathcal{B}_\alpha}\big) \cdot z_0^{n-m}.
	\end{equation*}
	Let $e^{\imath\vartheta}$ be a singular point on $\partial\mathbb{D}$ (there must exist at least one) for $f_{1_{\mathcal{B}_\alpha}}$ and choose any $0 < r < 1$. By hypothesis, the series
	\begin{equation*}
		f_{1_{\mathcal{B}_\alpha}}(z) = \sum_{m=0}^\infty \frac{\partial^m f_{1_{\mathcal{B}_\alpha}}(re^{\imath\vartheta})}{m!} \cdot (z-re^{\imath\vartheta})^m
	\end{equation*}
	has radius of convergence $1 - r$. Nevertheless, by the positivity of $\widetilde{T}_\alpha^{-n}$ and recalling that $1_{\mathcal{B}_\alpha}(t) \geq 0$ a.e., it may be seen that
	\begin{equation*}
		\big\lvert\big\lvert \partial^m f_{1_{\mathcal{B}_\alpha}}(re^{\imath\vartheta})\big\rvert\big\rvert \leq \big\lvert\big\lvert \partial^m f_{1_{\mathcal{B}_\alpha}}(r)\big\rvert\big\rvert \; \text{ for every } m \geq 0,
	\end{equation*}
	what, in particular, implies that the radius of convergence of the Taylor series of $f_{1_{\mathcal{B}_\alpha}}$ about $r$ cannot be greater than $1-r$. This proves that $1$ is a singular point for $\widetilde{T}_\alpha$ and the result follows.
\end{proof}
\begin{remark}
	The second part of the proof given for Corollary \ref{COR-shape_local_spectrum} is the vector-valued analogue of the classical result in Complex Analysis, known as Pringsheim Theorem; see, for example, \cite[Sec. 7.21]{titchmarsh}.
\end{remark}

In general, given an arbitrary operator $T \in \mathcal{L}(\mathcal{X})$, determining  the local spectrum at a non-zero $x \in \mathcal{X}$ is known to be a difficult problem.
%Those operators which fulfill $\sigma_T(x) = \sigma(T)$ for every $x \in \mathcal{X}\setminus \{0\}$ are said to have a fat spectrum.
Actually, finding vectors $x \in \mathcal{X}$ with non-trivial local spectra may be a hopeful starting point in order to seek for (hyper-)invariant subspaces, since the subsets of $\mathcal{X}$ defined as
\begin{equation*}
	X_T(F) = \big\lbrace x \in \mathcal{X} \, : \, \sigma_T(x) \subseteq F \big\rbrace,
\end{equation*}
turn out to be $T$-hyperinvariant linear manifolds by means of the properties \normalfont{(a), (b)} and \normalfont{(c)}, and behave well via functional calculus tools. They are called  \emph{local spectral subspaces} though they are not closed \emph{a priori}. Indeed, those operators $T\in\mathcal{L}(\mathcal{X})$ for which $X_T(F)$ is closed for every closed subset $F\subseteq \mathbb{C}$ are said to satisfy the \emph{Dunford property $(C)$}. Our next result states that
Bishop operators do not satisfy the \emph{Dunford property $(C)$}.

\begin{theorem}\label{THM-property_C}
	Let $\alpha \in (0,1)$ be any irrational. Then, the local spectral subspace $X_{T_\alpha}\big(\partial D(0,e^{-1})\big)$ (resp. $X_{T_\alpha^*}\big(\partial D(0,e^{-1})\big)$) is dense in $L^p[0,1)$ for $1 < p < \infty$. In particular, neither $T_\alpha$ nor  $T_\alpha^*$ have property $(C)$ on $L^p[0,1)$.
\end{theorem}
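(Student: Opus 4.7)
The strategy is three-fold: enlarge the family of vectors known to have local spectrum in $\partial D(0,e^{-1})$, use this to prove that $X_{T_\alpha}\bigl(\partial D(0,e^{-1})\bigr)$ is dense in $L^p[0,1)$, and conclude by a contradiction with Parrott's identification $\sigma(T_\alpha)=\overline{D(0,e^{-1})}$.

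For the first part, I would introduce, for each integer $N\ge 20$, the variant
\[
\mathcal{B}_\alpha^{(N)} = \Big\{\tfrac{1}{N}<t<1-\tfrac{1}{N}\,:\,\langle t-n\alpha\rangle>\tfrac{1}{Nn^2}\ \forall n\in\Z^*\Big\},
\]
whose complement in $[0,1)$ has measure $O(1/N)$. Rerunning the arithmetical chain of Section \ref{Section 3} (Lemma \ref{le:davie_str}, Lemmas \ref{le:bound_n_small}--\ref{le:bound_n_large}, Proposition \ref{pr:bound_n_general}, and Corollary \ref{co:norm_bound}) with the threshold $1/(Nn^2)$ in place of $1/(20n^2)$ still yields subexponential growth estimates for $\|\widetilde{T}_\alpha^{\pm n}1_{\mathcal{B}_\alpha^{(N)}}\|_p$, now with constants depending on $N$ but still of the form $o(|n|)$. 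Mimicking the proof of Theorem \ref{THM-local_spectrum} then yields $\sigma_{T_\alpha}(1_{\mathcal{B}_\alpha^{(N)}})\subseteq\partial D(0,e^{-1})$, and analogously for $T_\alpha^*$.

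For the second part, observe that for any bounded Borel function $\phi$ supported in $\mathcal{B}_\alpha^{(N)}$, the pointwise inequality
\[
|\widetilde{T}_\alpha^{-n}\phi(s)| = e^{-L_n(\{s-n\alpha\})}|\phi(\{s-n\alpha\})| \le \|\phi\|_\infty\,\widetilde{T}_\alpha^{-n}1_{\mathcal{B}_\alpha^{(N)}}(s),
\]
together with the forward bound supplied by Lemma \ref{le:davie_str}, transfers the subexponential growth to $\phi$, so $\phi\in X_{T_\alpha}\bigl(\partial D(0,e^{-1})\bigr)$. Density then follows by a truncate-and-restrict argument: given $f\in L^p[0,1)$ and $\varepsilon>0$, choose $M$ with $\|f-f\cdot 1_{|f|\le M}\|_p<\varepsilon/2$ and then $N$ with $M\cdot|[0,1)\setminus\mathcal{B}_\alpha^{(N)}|^{1/p}<\varepsilon/2$; the function $g = f\cdot 1_{|f|\le M}\cdot 1_{\mathcal{B}_\alpha^{(N)}}$ lies in $X_{T_\alpha}\bigl(\partial D(0,e^{-1})\bigr)$ and satisfies $\|f-g\|_p<\varepsilon$. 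The argument for $T_\alpha^*$ is identical.

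Finally, suppose towards a contradiction that $T_\alpha$ had property $(C)$. Then $X_{T_\alpha}\bigl(\partial D(0,e^{-1})\bigr)$ would be closed and, by density, equal to $L^p[0,1)$, so every $x\in L^p[0,1)$ would satisfy $\sigma_{T_\alpha}(x)\subseteq\partial D(0,e^{-1})$. Using the SVEP of $T_\alpha$ and a standard closed graph argument, the local resolvents $f_x\colon\C\setminus\partial D(0,e^{-1})\to L^p[0,1)$ combine into a bounded inverse of $T_\alpha-\lambda I$ at each $\lambda\in D(0,e^{-1})$, contradicting \eqref{spectrum of Bishop}. The same reasoning applies to $T_\alpha^*$. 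The principal obstacle is the first part: one must track the $N$-dependence through the arithmetical bounds of Section \ref{Section 3} and verify that replacing the absolute constant $20$ by $N$ enlarges the implicit constants in a way that preserves the $o(|n|)$ growth, after which the functional-analytic portion of the argument is routine.
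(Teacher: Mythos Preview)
Your argument is correct, but it takes a genuinely different route from the paper's. The paper never enlarges the set $\mathcal{B}_\alpha$ or reruns the estimates of Section~\ref{Section 3}. Instead it exploits two structural features of $T_\alpha$: the similarity relation $M_{e^{2\pi i m t}}T_\alpha=e^{-2\pi i m\alpha}T_\alpha M_{e^{2\pi i m t}}$, which places $e^{2\pi i m t}1_{\mathcal{B}_\alpha}$ in $X_{T_\alpha}(\partial D(0,e^{-1}))$ for every $m\in\Z$ and hence (by density of trigonometric polynomials) puts every $L^p$-function supported in $\mathcal{B}_\alpha$ into the closure; and the invariance $T_\alpha\bigl(X_{T_\alpha}(\partial D(0,e^{-1}))\bigr)=X_{T_\alpha}(\partial D(0,e^{-1}))$, which by iteration extends this to functions supported in $\bigcup_{j\in\Z}\tau_\alpha^{-j}(\mathcal{B}_\alpha)$. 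Ergodicity of the rotation $\tau_\alpha$ then forces this union to have full measure, yielding density.

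The trade-off is clear. Your approach is more self-contained and avoids the algebraic similarity and ergodicity, at the price of the laborious (though routine) re-verification of the $N$-dependence through Lemma~\ref{le:davie_str}--Corollary~\ref{co:norm_bound}; it also gives the slightly sharper statement that bounded functions supported in any $\mathcal{B}_\alpha^{(N)}$ lie \emph{inside} $X_{T_\alpha}(\partial D(0,e^{-1}))$ rather than merely in its closure. The paper's approach is shorter and uses Theorem~\ref{THM-local_spectrum} as a black box, leaning on the specific intertwining structure of Bishop operators, which makes it less portable but more elegant.
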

\begin{proof}
	Along the proof, let $M_\phi$ be the operator on $L^p[0,1)$ consisting on multiplying by $\phi$. As a direct consequence of the following identity
	\begin{equation}\label{EQ-unitary_equivalence}
		M_{e^{2\pi\imath m t}} \, T_\alpha =e^{-2\pi\imath m\alpha}\, T_\alpha \,  M_{e^{2\pi\imath m t}},
	\end{equation}
	we deduce that $T_\alpha$ and $e^{2\pi\imath m \alpha}T_\alpha$ are similar for every $m \in \mathbb{Z}$. In particular, this implies that
	\begin{equation*}
		\mathrm{span}\big\lbrace e^{2\pi\imath m t} \cdot 1_{\mathcal{B}_\alpha}(t) \, : \, m \in \mathbb{Z} \big\rbrace \subseteq X_{T_\alpha}\big(\partial D(0,e^{-1})\big),
	\end{equation*}
	and so, reminding that the span of the set $\{e^{2\pi\imath m t}\}_{m \in\mathbb{Z}}$ is dense in $L^p[0,1)$ for $1 < p < \infty$, we infer
	\begin{equation*}
		\big\lbrace x \in L^p[0,1) \, : \, \mathrm{supp}\, x \subseteq \mathcal{B}_\alpha\big\rbrace \subseteq \overline{X_{T_\alpha}\big(\partial D(0,e^{-1})\big)}.
	\end{equation*}
	Moreover, since ${T_\alpha} \big(X_{T_\alpha}\big(\partial D(0,e^{-1})\big)\big) = X_{T_\alpha}\big(\partial D(0,e^{-1})\big)$, we can try to perform the same argument with the set
	\begin{equation*}
	\mathrm{span}\big\lbrace T_\alpha \, M_{e^{2\pi\imath m t}} \,  1_{\mathcal{B}_\alpha} (t) \, : \, m \in \mathbb{Z} \big\rbrace = \mathrm{span}\big\lbrace t \, e^{2\pi\imath m t} \cdot 1_{\mathcal{B}_\alpha} (\{t+\alpha\}) \, : \, m \in \mathbb{Z} \big\rbrace.
	\end{equation*}
	But, since $M_t $ is a dense range operator, we deduce that the set $\{t \, e^{2\pi\imath m t}\}_{m \in\mathbb{Z}}$ spans densely within $L^p[0,1)$ again; hence
	\begin{equation*}
		\big\lbrace x \in L^p[0,1) \, : \, \mathrm{supp}\, x \subseteq \tau^{-1}_\alpha\big(\mathcal{B}_\alpha\big)\big\rbrace \subseteq \overline{X_{T_\alpha}\big(\partial D(0,e^{-1})\big)},
	\end{equation*}
	where $\tau_\alpha (t) = \{t+\alpha\}$. Now, as any operator of the form $M_{\{t+j\alpha\}}$ is of dense range and the finite product of dense range operators is again of this kind, we are in position to mimic our previous argument: for any $N\in\mathbb N$ we have
	\begin{eqnarray*}
		\overline{X_{T_\alpha}\big(\partial D(0,e^{-1})\big)} &\supseteq & \underset{j=-N, \ldots, N}{\mathrm{span}} \, \big\lbrace x \in L^p[0,1) \, : \, \mathrm{supp}\, x \subseteq \tau^{-j}_\alpha\big(\mathcal{B}_\alpha\big)\big\rbrace \\
		&= &\Big\lbrace x \in L^p[0,1) \, : \, \mathrm{supp}\, x \subseteq \bigcup_{j=-N}^N\tau^{-j}_\alpha\big(\mathcal{B}_\alpha\big)\Big\rbrace,
	\end{eqnarray*}
	so it also contains the set $W$ of $x$ with $\mathrm{supp}\, x\subset E=\bigcup_{j=-\infty}^{\infty}\tau^{-j}_\alpha\big(\mathcal{B}_\alpha\big)$. Since $\mathcal{B}_\alpha$ has strictly positive measure and $\tau_\alpha^{-1}$ is ergodic, we have that $E$ has measure 1 and therefore $W=L^p[0,1)$.

	Finally, for $T_\alpha^*$ an analogous argument works.
\end{proof}

\begin{remark}Observe that Theorem \ref{THM-property_C} applies not only for $T_\alpha$ or $T_\alpha^*$, but also for every non-invertible Bishop-type operator $T_{\phi,\alpha} \in \mathcal{L}\big(L^p[0,1)\big)$ which satisfies \cite[Thm. 2.6]{MACDONALD_inv_bishop_type}. In addition, this result somewhat complements the work begun in \cite{ARTICLE-gallardo_monsalve} consisting on identifying the local spectral properties fulfilled by those non-invertible Bishop-type operators.\medskip
\end{remark}

Finally, as a consequence of Theorem \ref{THM-property_C}, we show that the invariant linear manifolds consisting of the hyperrange, the analytical core and the algebraic core of $T_\alpha$ are dense in $L^p[0,1)$ for $1 < p < \infty$. Recall, the hyperrange of an operator $T\in\mathcal{L}(\mathcal{X})$ is defined as
\begin{equation*}
T^\infty(\mathcal{X}) = \bigcap_{n=1}^\infty T^n(\mathcal{X}).
\end{equation*}
In particular, by the injectivity of $T_\alpha$, we have that its hyperrange matches with its algebraic core $C(T_\alpha)$, which is defined as the greatest submanifold $M \subseteq L^p[0,1)$ such that $T_\alpha (M) = M$. On the other hand, the analytical core $K(T)$ is defined as the set of $x \in \mathcal{X}$ for which there exists a sequence $(x_n)_{n\geq 0}$ and a constant $\delta > 0$ such that
\begin{itemize}
	\item $x=x_0$ and $T x_{n+1} = x_n$ for every $n \geq 0$.
	\item $||x_n|| \leq \delta^n ||x||$ for every $n \geq 0$.
\end{itemize}
In general, one has that $K(T) \subseteq C(T)$ and $K(T)= X_T\big(\mathbb{C}\setminus \{0\}\big)$, see \cite[Thms. 1.21 and 2.18]{aiena}.
\begin{corollary}
	Let $\alpha \in (0,1)$ be any irrational number and $T_\alpha\in\mathcal{L}\big(L^p[0,1)\big)$ for $1 < p < \infty$. Then, all $K(T_\alpha)$, $C(T_\alpha)$ and $T_\alpha^{\infty}\big(L^p[0,1)\big)$ are non-closed dense linear submanifolds of $L^p[0,1)$. The same holds for $T_\alpha^* \in \mathcal{L}\big(L^p[0,1)\big)$ for $1 < p < \infty$.
\end{corollary}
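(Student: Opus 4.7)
The plan is to deduce everything from Theorem \ref{THM-property_C} together with the two identifications $K(T)=X_T(\mathbb{C}\setminus\{0\})$ and, in our setting, $T_\alpha^{\infty}(L^p[0,1))=C(T_\alpha)$. The argument splits naturally into density and non-closedness.

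For density, observe that $\partial D(0,e^{-1})\subseteq \mathbb{C}\setminus\{0\}$, so by the monotonicity of local spectral subspaces
\[
X_{T_\alpha}\bigl(\partial D(0,e^{-1})\bigr)\subseteq X_{T_\alpha}\bigl(\mathbb{C}\setminus\{0\}\bigr)=K(T_\alpha).
\]
Combining this with Theorem \ref{THM-property_C}, $K(T_\alpha)$ is already dense in $L^p[0,1)$ for $1<p<\infty$. Since $K(T_\alpha)\subseteq C(T_\alpha)$ and, by the injectivity of $T_\alpha$, the hyperrange $T_\alpha^\infty(L^p[0,1))$ coincides with $C(T_\alpha)$, density of the three manifolds follows. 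The same chain applies verbatim to $T_\alpha^*$ using the corresponding statement of Theorem \ref{THM-property_C}.

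For non-closedness, I would use the fact that $0\in \sigma(T_\alpha)$ (by \eqref{spectrum of Bishop}). If $T_\alpha^{\infty}(L^p[0,1))$ were closed, density would force $T_\alpha^{\infty}(L^p[0,1))=L^p[0,1)$, hence in particular $T_\alpha(L^p[0,1))=L^p[0,1)$; but $T_\alpha$ is injective, so by the open mapping theorem it would then be invertible, contradicting $0\in\sigma(T_\alpha)$. Thus $T_\alpha^{\infty}(L^p[0,1))=C(T_\alpha)$ is a proper dense manifold, and since $K(T_\alpha)\subseteq C(T_\alpha)$ the same conclusion propagates to $K(T_\alpha)$: if $K(T_\alpha)$ were closed it would also equal $L^p[0,1)$, forcing $C(T_\alpha)=L^p[0,1)$, which we have just excluded. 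The adjoint case is identical, using that $T_\alpha^*$ is likewise injective (the compression spectrum of $T_\alpha$ is empty) and $0\in\sigma(T_\alpha^*)=\sigma(T_\alpha)$.

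There is no serious obstacle here: the heavy lifting was done in Theorem \ref{THM-property_C}, which is where the arithmetic norm bounds from Section \ref{Section 3} are exploited. The rest is a combination of the standard chain $K(T)\subseteq C(T)\subseteq T^{\infty}(\mathcal{X})$ from \cite[Thms.~1.21, 2.18]{aiena} with a one-line spectral contradiction. The only mild subtlety worth emphasising in the write-up is why one may replace ``dense and closed'' by ``equal to the whole space'' in a Banach-space manifold context—this is of course just the definition of a closed linear submanifold as a closed subspace.
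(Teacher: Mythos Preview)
Your argument is correct and follows essentially the same route as the paper. The paper's proof is terser: it obtains density of $C(T_\alpha)$ directly from the fact that $T_\alpha$ is an injective dense-range operator (citing \cite[Corollary~1.B.3]{Beauzamy}), whereas you derive it via the chain $X_{T_\alpha}(\partial D(0,e^{-1}))\subseteq K(T_\alpha)\subseteq C(T_\alpha)$; both are fine. Your explicit spectral contradiction for non-closedness (closed $+$ dense $\Rightarrow$ surjective $\Rightarrow$ invertible, contradicting $0\in\sigma(T_\alpha)$) spells out what the paper leaves implicit, and is a welcome clarification.
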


\begin{proof}
Firstly, observe that $C(T_\alpha)$ is a dense linear submanifold of $L^p[0,1)$ since $T_\alpha$ is an injective dense range operator (see \cite[Corollary 1.B.3]{Beauzamy}). In addition, since $K(T_\alpha)$ trivially contains $X_{T_\alpha}\big(\partial D(0,e^{-1})\big)$, the result follows from Theorem \ref{THM-property_C}. An analogous proof works for $T_\alpha^*$.
\end{proof}

\bibliographystyle{plain}
\bibliography{ChGaMoUb}

\end{document}